\documentclass[12pt]{amsart}
\usepackage{amsfonts}
\usepackage[cp1250]{inputenc}
\usepackage{amssymb}
\usepackage{amsmath}
\usepackage{amsthm}
\usepackage{latexsym}
\usepackage{amsbsy}
\usepackage{graphicx}
\usepackage{color}
\usepackage{geometry}
\usepackage[cp1250]{inputenc}
\usepackage{graphicx}
\usepackage{enumerate}

\newtheorem{theorem}{Theorem}
\newtheorem*{ttheorem}{Theorem C}

\newtheorem*{theoremI}{Hitt's Theorem}
\newtheorem*{theoremY}{Hayashi's Theorem}

\newtheorem*{theoremB}{Theorem B1}
\newtheorem*{theoremBB}{Theorem B2}

\newtheorem{proposition}{Proposition}

\newtheorem{lemma}{Lemma}
\newtheorem{corollary}{Corollary}
\theoremstyle{remark}

\theoremstyle{definition}

\def\D{\mathbb{D}}
\def\T{\mathbb{T}}

\def\H{\mathcal{H}}
\def\M{\mathcal{M}}%

 \textwidth=16truecm
 \textheight=21truecm
 \oddsidemargin0cm
 \evensidemargin0cm

 \begin{document}
\title[Decompositions of de Branges-Rovnyak spaces]{Decompositions of de Branges-Rovnyak spaces}
\author{Bartosz \L anucha, Ma\l gorzata Michalska and Maria T. Nowak}
\address{Department of Mathematics\\ Maria Curie-Sk\l odowska University\\ Pl. Marii Curie-Sk\l odowskiej 1, 20-031 Lublin\\ Poland}%
\email{bartosz.lanucha@mail.umcs.pl}%
%\author{Ma\l gorzata Michalska}
%\address{Department of Mathematics\\ Maria Curie-Sk\l odowska University\\ Pl. Marii Curie-Sk\l odowskiej 1, 20-031 Lublin\\ Poland}%
\email{malgorzata.michalska@mail.umcs.pl}%
%\author{Maria T. Nowak}
%\address{Department of Mathematics\\ Maria Curie-Sk\l odowska University\\ Pl. Marii Curie-Sk\l odowskiej 1, 20-031 Lublin\\ Poland}%
\email{maria.nowak@mail.umcs.pl}%

\keywords{Hardy space, de Branges-Rovnyak space, nonextreme function, cyclic vector}
\subjclass{30H45, 47B32}
%\submitted{}

\begin{abstract}
    We  obtain  orthogonal  decompositions for de Branges-Rovnyak spaces
     $\H\left( \tfrac {I^n(1+I)}{2}\right)$ and $\H\left( \tfrac {I(1+I^2)}{2}\right)$, where $I$ is an inner function. We also discuss  the problem of cyclicity for these spaces.
\end{abstract}

\maketitle

 \section{Introduction}

 Let $H^2$ be the standard Hardy space on the unit disk $\mathbb
 D=\{z\in\mathbb{C}:\ |z|<1\}$ and let $\mathbb T=\partial \mathbb D$. For $\varphi\in
 L^{\infty}(\mathbb T)$ the Toeplitz operator $T_{\varphi}$ on $H^2$ is given by
 $T_{\varphi}f=P(\varphi f)$, where $P$ is the orthogonal
 projection from $L^2(\mathbb T)$ onto $H^2$.

%It is known that $H^2$ can be identified via boundary functions with a closed subspace of $L^2=L^2(\mathbb{T})$.
%This subspace consists of functions whose Fourier coefficients with negative indices vanish.
%

Given a function $b$ in the unit ball of $H^{\infty}$, the
{de Branges-Rovnyak space} $\mathcal{H}(b)$ is the image of
$H^2$ under the operator $(1-T_bT_{\overline{b}})^{1/2}$ with the range norm $\|\cdot\|_b$ defined by $\|( 1- T_bT_{\overline{b}})^{1/2}f\|_b= \|f\|$ for $f\in(\text{ker}
 (1-T_bT_{\overline{b}})^\bot$, where $\|\cdot\|$ denotes the norm in $H^2$. For the basic theory of
  $\mathcal H(b)$ spaces we refer to the books \cite{Sarason1} and   \cite{FM}.

  It is known that $\mathcal{H}(b)$ is a Hilbert space with a reproducing kernel
$$k_{w}^b(z)=\frac{1-\overline{b(w)}b(z)}{1-\overline{w}z},\quad z,w\in\mathbb{D}.$$
If $b$ is an inner function, then   $\mathcal H(b)$ is the
well-known model space $K_b=H^2\ominus bH^2$.
If $b$ is a
nonextreme point of the unit ball of $H^{\infty}$ (equivalently, $\log(1-|b|)\in L^1(\T)$), then one can define an outer function $a\in H^{\infty}$
such that $|b|^2+|a|^2=1$ a.e. on $\mathbb{T}$ and we say that $(b,a)$ is a pair. Moreover, if we suppose that $a(0)>0$, then $a$ is uniquely determined. Then we  also consider the Hilbert spaces $\mathcal M(a)= aH^2$ and $ \mathcal M(\overline a)= T_{\overline a}H^2$ with the corresponding range norms.
For these spaces the following contractive inclusions hold $$ \mathcal M(a)\subset  \mathcal M(\overline a)\subset \mathcal H(b).$$
Moreover, if $(b,a)$ is a corona pair (that is,  $|a|+|b|$ is bounded away from $0$ in $\mathbb{D}$), then
$\mathcal{H}(b)=\mathcal{M}(\overline{a})$, as sets.

Let $S$ denote the unilateral shift on $H^2$, that is,  the Toeplitz operator $T_z$ on $H^2$.
It is known that for a nonextreme $b$ the space $\mathcal H (b)$ is invariant   under  $S$ and  $S^*$.
Let $Y$ be the restriction of $S$ to $\mathcal{H}(b)$.
Recently A. Bergman (\cite{AB})  obtained interesting   characterizations of cyclic vectors in
 spaces $\mathcal{H}(b)$  with respect to the operator $Y$. The description of these vectors  depends on the orthogonal complement
 of $\mathcal M(a)$ in $\mathcal H(b)$.

In \cite{FMS},\cite{FG}  and \cite {AB}
 the spaces $\mathcal H(b)$ with $b=\frac{1+I}2$,
 where $I$ is an inner function, have been studied
 and
 the following orthogonal decomposition has been derived
\begin{equation} \label{gwiazdka}\mathcal H\left(\tfrac{1+I}2\right)= \mathcal M\left(\tfrac{1-I}2\right)\oplus_b K_I.\end{equation}

In Section 3 of this paper we obtain orthogonal  decompositions  for spaces $\H\left( \tfrac {I^n(1+I)}{2}\right)$ and $\H\left( \tfrac {I(1+I^2)}{2}\right)$,
where $I$ is an inner function and $n$ is a positive integer. Our decompositions imply descriptions of orthogonal complements of the corresponding $\mathcal M(a)$ spaces. %We note that these decompositions give descriptions of orthogonal complements of the corresponding $\mathcal M(a)$ spaces.
%We also discuss the problem of cyclicity for these spaces.
We then show that $\H\left( \tfrac {I^n(1+I)}{2}\right)=\H\left( \tfrac {1+I}{2}\right)$ and $\H\left( \tfrac {I(1+I^2)}{2}\right)=\H\left( \tfrac {1+I^2}{2}\right)$ as sets. These equalities and
Bergman's results mentioned above allow us to describe cyclic vectors in the spaces under consideration.

Finally, we give another proof of A. Bergman's result on cyclic vectors in $\mathcal H(b)$ in the case when $\mathcal M(a)$ has a
 finite codimension, based on a result in \cite{NSS}.

\section{Preliminaries}

Since for a function $b$ in the closed unit ball of $H^\infty$ the function $\frac{1+b}{1-b}$ has a positive real part, there exists a positive Borel measure $\mu$ on $\mathbb{T}$ such that
\begin{equation}
\frac{1+b(z)}{1-b(z)}=\int_{\mathbb T}\frac {\zeta+ z}{\zeta-
    z}\,d\mu (\zeta)+i\text{Im}\frac{1+b(0)}{1-b(0)},\quad
z\in\mathbb D. \label{Herglotz}
\end{equation}
The measure $\mu$ is called the Aleksandrov-Clark measure associated with $b$. We note that in the case when $b$ is inner, the measure $\mu$ is singular  with respect to the normalized Lebesgue measure $m$. % On the other hand, if $(1-b)^{-1}\in L^2$, then the associated Aleksandrov-Clark measure $\mu$ is absolutely continuous (\cite{}).
For nonextreme $b$ the function $\bigl|F\bigr|^2$, where $F=\frac{a}{1-b}$, is the Radon-Nikodym derivative of the absolutely continuous component of
$\mu$ with respect to $m$. Hence the Lebesgue decomposition of $\mu$ is
$$d\mu=|F|^2dm+d\sigma,$$
where $\sigma$ is singular with respect to $m$.

If $\mu$ is the Aleksandrov-Clark measure associated with $b$, the operator $V_b$ defined by
\begin{equation}
\label{fau}
(V_bq)(z)= (1-b(z))\mathcal{K}_{\mu}q(z)= (1-b(z))  \int_{\mathbb T}\frac{q(\zeta)}{1-\bar{\zeta}z}d\mu(\zeta)\end{equation}
is an isometry of $H^2(\mu)$ (the closure of analytic polynomials in $L^2(\mu)$) onto $\mathcal H(b)$ (\cite[(III-7)]{Sarason1}).

It is also known that
$$V_{b}H^2(|F|^2dm)=(1-b)T_{\overline{F}}H^2$$
and $(1-b)T_{\overline{F}}$ is an isometry of $H^2$ into $\mathcal H(b)$.
 Moreover, $(1-b)T_{\overline{F}}$ maps $H^2$ onto $\mathcal H(b)$ if and only if $\mu$ is absolutely continuous (see \cite[(IV-13)]{Sarason1}
 for details). Furthermore, we have
\begin{align}
\begin{split}
\label{wz_H(b)}
V_{b}H^2(|F|^2dm)&= T_{1-b}T_{\overline{F}}\bigl( \overline{T_{\frac{F}{\overline{F}}}(H^2)} \oplus
\big(T_{\frac{F}{\overline{F}}}(H^2)\bigr)^\bot \bigr)\\ &=
\overline{\M(a)}^b\oplus_b T_{1-b}T_{\overline{F}}(\ker T_{\frac{\overline{F}}{F}}).
\end{split}
\end{align}
This means that in the case when the kernel of $T_{\frac{\overline{F}}{F}}$  is not trivial,  its  image under  $T_{1-b}$ will be contained in the orthogonal complement of $\mathcal M(a)$. This happens in the case when $b=\frac{I(1+I^2)}{2}$.

In the proof of Theorem 2 we apply Hayashi's theorem  that gives the characterization of the kernels of Toeplitz operators on $H^2$. Kernels of Toeplitz operators are the so-called nearly $S^\ast$-invariant spaces.
A closed subspace $M$ of $H^2$ is said to be \emph{nearly
$S^\ast$-invariant} if for every $f\in M$ vanishing at 0, we also
have $S^\ast f\in M$.
Nearly $S^\ast$-invariant spaces are characterized by Hitt's theorem
\cite{Hitt}.
\begin{theoremI}
The closed subspace M of $H^2$ is nearly $S^\ast$-invariant  if and
only if there exist
 a function f of unit norm  and a model space
$K_I=H^2\ominus IH^2$ such that $M= T_fK_I$, where $I$ is an inner
function vanishing  at the origin, and $T_f$ acts isometrically on
$K_I$.
\end{theoremI}
Assume that $f$ is an outer function of unit $H^2$ norm. Then  there exist a function $b$ in the unit ball of $H^{\infty}$ and an outer function $a$ such that $f=\frac{a}{1-b}$ and $|a|^2+ |b|^2=1$ a.e. on $\mathbb T$. It has been proved by D. Sarason (\cite{Sarason4})  that  $T_f$ acts isometrically on $ K_I$ if and only $I$ divides $b$. Then  $f=\frac{a}{1-b_0I}$ and  $f_0=\frac{a}{1-b_0}\in H^2$ (see  \cite{Sarason4}).
Recall that a function $f\in H^1$ is called rigid if and only if no other functions in $H^1$, except for a positive scalar multiples of $f$ have the same argument as $f$ a.e. on $\mathbb T$ .

Under the above notation Hayashi's theorem reads as follows

\begin{theoremY}
The nearly $S^\ast$-invariant space   $M=T_fK_I$ is the kernel of a
Toeplitz operator if and only if $f_0^2$ is a rigid function. Furthermore, then $M= \ker T_{\bar I\bar f/f}$.
\end{theoremY}

Next observe that for every $\lambda\in\mathbb{T}$ we have $\mathcal H(\overline{\lambda}b)=\mathcal H(b)$. For $\lambda\in\mathbb{T}$ let $\mu_{\lambda}$ denote the Aleksandrov-Clark measure associated with $\overline{\lambda}b$. So, if $F_{\lambda}=\frac{a}{1-\overline{\lambda}b}$, then $|F_{\lambda}|^2$ is the Radon-Nikodym derivative of the absolutely continuous component of
$\mu_{\lambda}$ with respect to $m$. Additionally note that the measure $\mu_{\lambda}$ is absolutely continuous for almost every $\lambda\in\mathbb{T}$ (\cite[(IV-10)]{Sarason1} and \cite[vol. II, p. 333]{FM}).

%
%It then follows that $\overline{\mathcal M(a)}^b=\H(b)$ if and only if $\ker T_{\frac{\bar
%       f_\lambda}{f_\lambda}}$ is trivial. It follows from the Hayashi Theorem that it happens if and only if $f^2_\lambda$ is rigid.

In the case when $\mathcal M(a)$ is not dense in $\H(b)$ we denote by $\H_0(b)$ the
orthogonal complement of $\M(a)$ in $\H(b)$. The dimension of $\H_0(b)$ is finite if and only if
there is a positive integer $k$ such that $\text{dim} \ker T_{\frac{\overline{F}_\lambda}{F_\lambda}}=k$. The Hayashi Theorem implies, that the last condition holds true if $F_\lambda=p\cdot f$, where $p$ is a polynomial of degree $k$ having all his roots
on the unit circle and $f^2$ is rigid (\cite[(X-17)]{Sarason1}). In this case the space $\H_0(b)$ has  been also  described explicitly in \cite{NSS}.

%
%the measure $\mu$ is absolutely continuous then $$\|\mu\|=\|F\|_2^2=\Re\left(\frac{1+b(0)}{1-b(0)}\right)$$%=\frac{1-|b(0)|^2}{|1-b(0)|^2}.$$

%We  also consider the Hilbert spaces $\mathcal M(a)= aH^2$ and $ \mathcal M(\overline a)= T_{\overline a}H^2$ with the corresponding range norms.
%For these spaces the following contractive inclusions hold $$ \mathcal M(a)\subset  \mathcal M(\overline a)\subset \mathcal H(b).$$
%Moreover, if $(b,a)$ is a corona pair (that is,  $|a|+|b|$ is bounded away from $0$ in $\mathbb{D}$), then
%$\mathcal{H}(b)=\mathcal{M}(\overline{a})$.

In the proof of Theorem \ref{uan} we use the following technical lemma, which may be of interest in itself.

\begin{lemma}
    \label{lem_zeros_of_pol}
    The polynomial \begin{equation}\label{peen}p_n(z)=a_nz^n+a_0\sum\limits_{j=0}^{n-1}z^j\quad\text{with } a_0>a_n>0,\end{equation} has no zeros in the closed unit disk $\overline{\mathbb{D}}$.
\end{lemma}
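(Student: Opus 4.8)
The plan is to show that $p_n(z) \neq 0$ for all $z$ with $|z| \leq 1$, treating the cases $z=1$, $|z|=1$ with $z\neq 1$, and $|z|<1$ in a unified way via a telescoping trick. First, at $z=1$ we simply have $p_n(1) = a_n + na_0 > 0$, so $1$ is not a zero. For $z \neq 1$ the standard observation is to multiply by $(1-z)$ to collapse the geometric sum: $(1-z)p_n(z) = a_n z^n(1-z) + a_0(1-z^n) = a_n z^n - a_n z^{n+1} + a_0 - a_0 z^n = a_0 - (a_0 - a_n)z^n - a_n z^{n+1}$. Thus a zero of $p_n$ in $\overline{\mathbb D}\setminus\{1\}$ would force $a_0 = (a_0 - a_n)z^n + a_n z^{n+1}$.

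Next I would estimate the right-hand side. For $|z| \leq 1$ we have, using $a_0 > a_n > 0$ so that both coefficients $a_0 - a_n$ and $a_n$ are positive and sum to $a_0$,
\[
\left| (a_0 - a_n)z^n + a_n z^{n+1} \right| \leq (a_0 - a_n)|z|^n + a_n |z|^{n+1} \leq (a_0 - a_n) + a_n = a_0,
\]
with equality in the first inequality only when $(a_0-a_n)z^n$ and $a_n z^{n+1}$ are nonnegative real multiples of a common unit vector (equivalently $z^n$ and $z^{n+1}$ point in the same direction, forcing $z$ to be a nonnegative real, hence $z \in [0,1]$ if $|z|\le 1$), and equality in the second inequality only when $|z| = 1$. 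The two equality conditions together force $z = 1$. Since we have excluded $z=1$, we get strict inequality $\left| (a_0 - a_n)z^n + a_n z^{n+1} \right| < a_0$, contradicting the equation $a_0 = (a_0 - a_n)z^n + a_n z^{n+1}$. Hence $p_n$ has no zero in $\overline{\mathbb D}\setminus\{1\}$ either, which completes the proof.

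The only mildly delicate point — the "main obstacle," such as it is — is being careful with the equality analysis in the triangle inequality: one must check that equality throughout is possible only at $z=1$, rather than merely bounding by $a_0$ and missing that a zero on the unit circle could in principle saturate the bound. Writing $z = e^{i\theta}$ on $\mathbb T$, the expression $(a_0-a_n)e^{in\theta} + a_n e^{i(n+1)\theta} = e^{in\theta}\bigl((a_0-a_n) + a_n e^{i\theta}\bigr)$ has modulus $\bigl|(a_0-a_n) + a_n e^{i\theta}\bigr|$, which equals $a_0 = (a_0-a_n)+a_n$ precisely when $e^{i\theta}=1$; this handles the boundary cleanly and makes the interior case automatic since there $|z|^n < 1$ gives strict inequality immediately. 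So the argument is short and the estimate does all the work.
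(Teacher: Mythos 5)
Your proof is correct, and it takes a genuinely different route from the paper. The paper invokes Cohn's rule (from Rahman--Schmeisser): starting from $p_n$ it forms $p_{n-1}=a_0p_n-a_np_n^*$, checks that the new polynomial has the same shape with the same coefficient inequality, and iterates down to a degree-one polynomial with no zeros in $\overline{\mathbb{D}}$. You instead multiply by $(1-z)$ to collapse the geometric sum, reducing the claim to showing that $(a_0-a_n)z^n+a_nz^{n+1}$ cannot equal $a_0$ on $\overline{\mathbb{D}}\setminus\{1\}$, which follows from the triangle inequality once the equality case is pinned down via the factorization $e^{in\theta}\bigl((a_0-a_n)+a_ne^{i\theta}\bigr)$; the point $z=1$ is handled by direct evaluation. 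Your argument is self-contained and elementary, avoiding the citation to Cohn's rule and the bookkeeping of the iterated coefficients $a_k'$, $b_k$; the paper's approach is more mechanical and generalizes to situations where no telescoping identity is available, but for this particular polynomial your estimate does all the work in a few lines. The one delicate step --- verifying that equality throughout the triangle inequality forces $z=1$, which requires both $a_0-a_n>0$ and $a_n>0$ --- is exactly the point you flag and handle correctly.
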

\begin{proof}
    We will apply the following Cohn's rule \cite[Thm. 11.5.3]{RS}.

    If  $r_n(z)=a_nz^n+\ldots+a_0$ is a complex polynomial such that $|a_n|<|a_0|$, then $r_n$ has the same number of zeros in $\overline{\mathbb{D}}$ as the polynomial $r_{n-1}=\overline{a}_0\cdot r_n-a_n\cdot r_n^*$, where
    $$r_n^*(z)=\overline{a}_0z^n+\ldots+\overline{a}_n.$$

    Using this rule to $p_n$ given by \eqref{peen} we see that the polynomial %$p_n(z)=a_nz^n+a_0\sum\limits_{j=0}^{n-1}z^j$, $a_0>a_n>0$, we see that the polynomial
    \begin{align*}
    p_{n-1}(z)&=a_0\cdot p_n(z)-a_n\cdot p_n^*(z)
    = a_0\cdot a_nz^n+\left(a_0\right)^2\sum_{j=0}^{n-1}z^j-a_n\cdot a_0\sum_{j=1}^{n}z^j-\left(a_n\right)^2\\
    &=(a_0-a_n) a_0\sum_{j=1}^{n-1}z^j+\left(a_0\right)^2 -\left(a_n\right)^2
    =(a_0-a_n) a_0\sum_{j=1}^{n-1}z^j+(a_0-a_n)(a_0+a_n)\\  &=a_{n-1}'\sum_{j=1}^{n-1}z^j+a_0',
    \end{align*}
    where
    $$a_{n-1}'=(a_0-a_n)a_0\quad\text{and}\quad a_0'=(a_0-a_n)(a_0+a_n),$$
    has the same number of zeros in $\overline{\mathbb{D}}$ as $p_n$. Also note that $0<a_{n-1}'<a_0'$.

    Now starting from $p_{n-1}$ we repeat this procedure to obtain polynomials $p_{n-2},\ldots, p_1$, each of the form
    \begin{equation*}p_k(z)=b_{k}\sum\limits_{j=1}^{k}z^j+b_0\quad\text{with } b_0>b_k>0,\quad k=1,2,\ldots,n-2,
    \end{equation*}
    having the same numbers of zeros in $\overline{\mathbb{D}}$ as $p_n$.
    Since $p_1(z)=b'_1z+b'_0$  has no zeros in $\overline{\mathbb{D}}$ ($b'_0>b'_1>0$), the same is true for $p_n$.
\end{proof}

 \section{The spaces $\H\left( \tfrac {I^n(1+I)}{2}\right)$ and $\H\left( \tfrac {I(1+I^2)}{2}\right)$}

%Assume first that $b=\tfrac {I^n(1+I)}{2}$, where $I$ is an inner function and $n\geq 0$ is an integer. Clearly, if $a=\tfrac {1-I}{2}$, then $(b,a)$ is a pair.
Our first result is the following.

%grivaux - pair up to a constant
\begin{theorem}\label{uan}
Let $b=\frac{I^n(1+I)}{2}$, where $I$ is an inner function and $n\geq 0$ is an integer. Then the following orthogonal decomposition of $\mathcal{H}(b)$ holds
    \begin{equation}\label{ququ}\mathcal{H}(b)= (1-I)H^2\oplus_b(2+2I+\ldots+2I^{n-1}+ I^n) K_I.\end{equation}
\end{theorem}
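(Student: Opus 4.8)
The plan is to realise $\mathcal H(b)$ through Aleksandrov--Clark theory, using the unitary $V_b$ of $H^{2}(\mu)$ onto $\mathcal H(b)$ and splitting $H^{2}(\mu)$ along the Lebesgue decomposition of $\mu$; Lemma~\ref{lem_zeros_of_pol} is exactly what makes that decomposition explicit. First I would record the telescoping identity
\[
1-b=1-\tfrac{I^{n}+I^{n+1}}{2}=\tfrac12(1-I)P_n(I),\qquad P_n(z):=z^{n}+2\sum_{j=0}^{n-1}z^{j},
\]
which comes from $2-I^{n}-I^{n+1}=(1-I^{n+1})+(1-I^{n})=(1-I)\bigl(2+2I+\cdots+2I^{n-1}+I^{n}\bigr)$; note $P_n(1)=2n+1$. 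By Lemma~\ref{lem_zeros_of_pol} the polynomial $P_n$ has no zeros in $\overline{\mathbb D}$, hence $1/P_n\in H^\infty$ and $P_n(I)$ is invertible in $H^\infty$, bounded away from $0$ in $\mathbb D$.

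Next I would pin down the pair $(b,a)$ and the function $F=a/(1-b)$. On $\mathbb T$ one has $|a|^{2}=1-|b|^{2}=\bigl|\tfrac{1-I}{2}\bigr|^{2}$, and $\tfrac{1-I}{2}$ is outer (standard for inner $I$, since $\tfrac{1+I}{1-I}$ has nonnegative real part in $\mathbb D$, so $\tfrac1{1-I}\in N^{+}$); thus $a=\lambda\tfrac{1-I}{2}$ for some $\lambda\in\mathbb T$, so that $\mathcal M(a)=(1-I)H^{2}$ as sets and $F=\lambda/P_n(I)$ is invertible in $H^\infty$. It follows that $|F|^{2}=1/|P_n(I)|^{2}$ is bounded above and below, so $H^{2}(|F|^{2}dm)=H^{2}$; that $F^{2}$ is rigid (it is bounded away from $0$, so any $H^{1}$-function with the same argument, divided by $F^{2}$, is a nonnegative $H^{1}$-function, hence constant), so $\ker T_{\overline F/F}=\{0\}$; and that $(1-b)T_{\overline F}h=\tfrac{1-I}{2}P_n(I)P(\overline Fh)\in(1-I)H^{2}$ for every $h\in H^2$. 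Feeding this into \eqref{wz_H(b)} collapses it to
\[
\overline{\mathcal M(a)}^{\,b}=(1-b)T_{\overline F}H^{2}=\mathcal M(a)=(1-I)H^{2},
\]
a closed subspace of $\mathcal H(b)$ (it is the isometric image of $H^2$ under $(1-b)T_{\overline F}$).

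It then remains to compute the orthogonal complement, and here I would use $\mu$. From $\tfrac{1+b}{1-b}=\tfrac{2+I^{n}(1+I)}{(1-I)P_n(I)}$, subtracting $\tfrac{2}{2n+1}\tfrac{1+I}{1-I}$ yields a function whose numerator (a polynomial in $I$) vanishes at $I=1$ because $P_n(1)=2n+1$; cancelling $1-I$ and using the invertibility of $P_n(I)$, the difference lies in $H^\infty$. Hence $\mu=|F|^{2}dm+\tfrac{2}{2n+1}\sigma_I$, where $\sigma_I$ is the purely singular Aleksandrov--Clark measure of $I$. Since $\log|F|\in L^\infty$, the analytic polynomials are dense in $H^{2}(|F|^{2}dm)\oplus L^{2}\bigl(\tfrac{2}{2n+1}\sigma_I\bigr)$, so $H^{2}(\mu)$ splits along these summands and $\mathcal H(b)=V_bH^{2}(|F|^{2}dm)\oplus_b V_bL^{2}\bigl(\tfrac{2}{2n+1}\sigma_I\bigr)=(1-I)H^{2}\oplus_b V_bL^{2}\bigl(\tfrac{2}{2n+1}\sigma_I\bigr)$. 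Finally, for $q\in L^{2}(\sigma_I)$ viewed as an element of $L^{2}(\mu)$ vanishing on the absolutely continuous part, $V_bq=(1-b)\mathcal K_{\mu}q=\tfrac{2}{2n+1}(1-b)\mathcal K_{\sigma_I}q=\tfrac{P_n(I)}{2n+1}(1-I)\mathcal K_{\sigma_I}q=\tfrac{P_n(I)}{2n+1}V_Iq$, where $V_I=(1-I)\mathcal K_{\sigma_I}$ is the Clark unitary of $L^{2}(\sigma_I)$ onto $K_I$; as $q$ ranges over $L^{2}(\sigma_I)$ the vector $V_Iq$ fills out $K_I$, so the second summand is $\tfrac{P_n(I)}{2n+1}K_I=P_n(I)K_I$, which is \eqref{ququ}.

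The step I expect to be the real obstacle is the determination of $\mu$: showing its singular part is exactly a scalar multiple of the Clark measure $\sigma_I$ of $I$, and justifying the splitting $H^{2}(\mu)=H^{2}(|F|^{2}dm)\oplus L^{2}(\mu_{\mathrm s})$ together with the identification $V_bL^{2}(\mu_{\mathrm s})=P_n(I)K_I$ --- all of which rest on the non-degeneracy $\log|F|\in L^\infty$ guaranteed by Lemma~\ref{lem_zeros_of_pol}. (An alternative route bypassing Clark measures: combine \eqref{gwiazdka} with the standard factorization $\mathcal H(I^{n}\beta)=K_{I^{n}}\oplus_b I^{n}\mathcal H(\beta)$ to get $\mathcal H(b)=K_{I^{n}}\oplus_b I^{n}\mathcal M(\tfrac{1-I}{2})\oplus_b I^{n}K_I$, then regroup using $K_{I^{n+1}}=\bigoplus_{j=0}^{n}I^{j}K_I$ and the fact that the $\mathcal H(\tfrac{1+I}{2})$-inner product equals twice the $H^{2}$-inner product on $K_I$; the coefficients of $P_n$ are precisely what force $(1-I)K_{I^{n}}$ and $P_n(I)K_I$ to be orthogonal and to span $K_{I^{n+1}}$.)
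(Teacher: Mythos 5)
Your argument is correct and follows essentially the same route as the paper: compute the Clark measure of $b$, use Lemma \ref{lem_zeros_of_pol} to show $\widetilde F=\frac{1+b}{1-b}-\frac{2}{2n+1}\cdot\frac{1+I}{1-I}\in H^\infty$ so that $d\mu=|F|^2dm+\frac{2}{2n+1}d\sigma_I$, identify $V_bH^2(|F|^2dm)=(1-I)H^2$ via the invertibility of $F$ (equivalently, triviality of $\ker T_{\overline F/F}$ from rigidity of $F^2$), and identify $V_bL^2(\sigma_I)$ with $(2+2I+\cdots+2I^{n-1}+I^n)K_I$ through the Clark unitary for $I$. The only cosmetic differences are that you prove rigidity of $F^2$ directly rather than citing the criterion $1/F^2\in H^1$, and you sketch rather than compute the explicit numerator of $\widetilde F$.
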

%Let $I$ be an inner function such that $I(0)=0$ and for an integer $n\geq 0$ let  $b=\frac{I^n(1+I)}{2}$. Then the following orthogonal decomposition of $\mathcal{H}(b)$ holds

Note that for $b$ given in Theorem \ref{uan} and $a=\tfrac {1-I}{2}$, $(b,a)$ is a pair.

\begin{proof}[Proof of Theorem \ref{uan}]
    By the Herglotz formula there exists a unique positive Borel measure $\mu$ on $\mathbb{T}$ such that
\begin{equation*}%\label{her}
\begin{split}
\frac{1+b(z)}{1-b(z)}
&=\int_{\mathbb{T}}\frac{\zeta+z}{\zeta-z}d\mu(\zeta)+i\text{Im}\frac{1+b(0)}{1-b(0)}.
%&=\frac{1}{2\pi}\int_0^{2\pi}\frac{1+ze^{-it}}{1-ze^{-it}}|F(e^{it})|^2\,dt
%+\frac{1}{2\pi} \int_0^{2\pi}\frac{1+ze^{-it}}{1-ze^{-it}}d\sigma(e^{it}).
\end{split}
\end{equation*}
%Note that since $b(0)=0$, we have $\mu(\mathbb{T})=1$.
We also know that the absolutely continuous component of the measure $\mu$ is equal to $|F|^2dm$, where
\begin{equation}
\label{ef}F=\frac{a}{1-b}=\frac{1-I}{2-I^n-I^{n+1}}=\frac{1}{I^n+2I^{n-1}+\ldots+2I+2}.\end{equation} We will show that
\begin{equation}
\label{compo}
d\mu=|F|^2dm+\tfrac{2}{2n+1}\,d\sigma_I,
\end{equation}
where $\sigma_I$ is the singular measure such that
$$ \frac{1-|I|^2}{|1-I|^2}=
 \int_{\mathbb{T}}\frac{1-|z|^2}{|\zeta-z|^2}\, d\sigma_I(\zeta).$$
The last equality means also that
 $$ \frac{1+I}{1-I}=
 \int_{\mathbb{T}}\frac{\zeta+z}{\zeta-z}\, d\sigma_I(\zeta)+i\text{Im}\frac{1+I(0)}{1-I(0)}$$
 and so \eqref{compo} is equivalent to
\begin{equation*}\label{two}
\frac{1+b}{1-b}=\int_{\mathbb{T}}\frac{\zeta+z}{\zeta-z}|F(\zeta)|^2dm(\zeta)+\frac{2}{2n+1}\cdot\frac{1+I}{1-I}+i\text{Im}\frac{1+b(0)}{1-b(0)}-\frac{2i}{2n+1}\text{Im}\frac{1+I(0)}{1-I(0)}.
\end{equation*}
A calculation gives
$$\widetilde{F}=\frac{1+b}{1-b}-\frac{2}{2n+1}\cdot\frac{1+I}{1-I}
=\frac{-(2n-1)I^n+\sum_{j=0}^{n-1}(4n-2-8j)I^j}{(2n+1)(I^n+2I^{n-1}+\ldots+2I+2)}$$
and Lemma \ref{lem_zeros_of_pol} implies that $\widetilde{F}\in H^\infty$. % Moreover, we have $$\text{Re} \widetilde{F}(0)=\text{Re}\frac{1+b(0)}{1-b(0)}-\frac{2}{2n+1}\text{Re}\frac{1+I(0)}{1-I(0)}$$%=\tfrac{2n-1}{2n+1}=$$
%and,
 Since $\text{Re} \frac{1+I}{1-I}=0$ a.e. on $\mathbb{T}$,
$$\text{Re} \widetilde{F}=\text{Re} \frac{1+b}{1-b}=|F|^2\quad \text{a.e. on }\mathbb{T}.$$
This proves \eqref{compo}. Consequently, by \eqref{fau},
\begin{align*}
\mathcal{H}(b)&=V_{b}H^2(d\mu)=V_{b}H^2(|F|^2dm)\oplus_{b}V_{b}L^2(d\sigma_{I}).%\\
%&=(1-b)T_{\overline{F}}H^2\oplus_b V_bL^2({d\sigma_I})
%=\tfrac12(1-I)H^2\oplus_b V_bL^2({d\sigma_I}).
\end{align*}

Now we note that if $F$ is given by \eqref{ef}, then $F^2$ is a rigid function.  This follows from Theorem 6.22 in \cite{FM}, which states that if a function $f\in H^1$ is such that $1/f\in H^1$, then $f$ is rigid.

Hence $\text{ker}T_{\frac{\overline{F}}{{F}}}=\{0\}$ (see \cite{Sarason4}) and it follows that
$$V_{b}H^2(|F|^2dm)=(1-b)T_{\overline{F}}H^2=(1-b)T_{\overline{F}}\left(\overline{{T_{\frac{{F}}{\overline{F}}}H^2}}\oplus\text{ker}T_{\frac{\overline{F}}{{F}}}\right)=(1-I)H^2.$$
Furthermore, observe that
\begin{equation}
\label{12}
\begin{split}V_bL^2({d\sigma_I})&=(1-b)\mathcal{K}_{\sigma_I}(L^2({d\sigma_I}))\\
&=\tfrac12(I^n+2I^{n-1}+\ldots+2I+2)(1-I)\mathcal{K}_{\sigma_I}(L^2({d\sigma_I})),\end{split}
\end{equation}
where
$$\mathcal{K}_{\sigma_I}h(z)=\int_{\mathbb{T}}\frac{h(\zeta)}{1-\overline{\zeta}z}d\sigma_I(\zeta),\quad h\in L^2({d\sigma_I}).$$
Since
$$(1-I)\mathcal{K}_{\sigma_I}(L^2({d\sigma_I}))=V_IL^2({d\sigma_I})= K_I,$$
the proof of \eqref{ququ} is complete.

%Hence
%$$\widetilde{F}(z) =\int_{\mathbb{T}}\frac{\zeta+z}{\zeta-z}\text{Re} \widetilde{F}(\zeta)\,dm(\zeta)
%=\int_{\mathbb{T}}\frac{\zeta+z}{\zeta-z} |F(\zeta)|^2dm(\zeta).$$
\end{proof}

%\begin{remark}
%We mention here that if $b$ is as in Theorem \ref{uan} with $I(z)=z$, then $b'(1)=\frac{2n+1}{2}$.
%\end{remark}

\begin{proposition}\label{tuan}
    For every nonconstant inner function $I$ and for any positive integer $n$, we have
    \begin{equation}\label{rowne}
    \mathcal{H}(\tfrac{I^n(1+I)}{2})=\mathcal{H}(\tfrac{1+I}{2})
    \end{equation}
    as sets.
\end{proposition}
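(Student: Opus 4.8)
The plan is to identify each of the spaces $\mathcal H\bigl(\tfrac{I^n(1+I)}{2}\bigr)$, $n\ge 0$, with one and the same space $\mathcal M(\overline a)$, where $a=\tfrac{1-I}{2}$; since the case $n=0$ is precisely $\mathcal H\bigl(\tfrac{1+I}{2}\bigr)$, this gives \eqref{rowne}. The first observation is that the outer companion of $b=\tfrac{I^n(1+I)}{2}$ is $a=\tfrac{1-I}{2}$ for \emph{every} $n\ge 0$, not just for a particular $n$: on $\mathbb T$ we have $|I|=1$, so $|b|^2=\tfrac14|1+I|^2$ and hence $|a|^2=1-|b|^2=\tfrac14|1-I|^2$, independently of $n$, while $\tfrac{1-I}{2}$ is outer (this is the pair recorded right after Theorem~\ref{uan} for $n\ge1$ and the pair in \eqref{gwiazdka} for $n=0$). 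Now I would invoke the fact recalled in the introduction: if $(b,a)$ is a corona pair, i.e.\ $|a|+|b|$ is bounded away from $0$ on $\mathbb D$, then $\mathcal H(b)=\mathcal M(\overline a)$ as sets. Thus everything reduces to checking that $\bigl(\tfrac{I^n(1+I)}{2},\tfrac{1-I}{2}\bigr)$ is a corona pair for every $n\ge 0$.

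The computation to carry out is the lower bound
\[
2\bigl(|a(z)|+|b(z)|\bigr)=|1-I(z)|+|I(z)|^{\,n}\,|1+I(z)|\ \ge\ \min\{\tfrac12,\,2^{1-n}\}>0,\qquad z\in\mathbb D .
\]
I would split into two cases according to the size of $|I(z)|$. If $|I(z)|\le\tfrac12$, then $|1-I(z)|\ge 1-|I(z)|\ge\tfrac12$ and the left-hand side is at least $\tfrac12$. If $|I(z)|>\tfrac12$, then $|I(z)|^n>2^{-n}$ and, by the triangle inequality, $|1-I(z)|+|1+I(z)|\ge\bigl|(1-I(z))+(1+I(z))\bigr|=2$; hence $|1-I(z)|+|I(z)|^n|1+I(z)|\ge 2^{-n}\bigl(|1-I(z)|+|1+I(z)|\bigr)\ge 2^{1-n}$. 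This proves the bound, so $(b,a)$ is a corona pair, and consequently $\mathcal H\bigl(\tfrac{I^n(1+I)}{2}\bigr)=\mathcal M\bigl(\tfrac{1-I}{2}\bigr)=\mathcal H\bigl(\tfrac{1+I}{2}\bigr)$ as sets.

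I do not expect a genuine obstacle here: the two points needing care are simply (i) noticing that the outer function $a=\tfrac{1-I}{2}$ does not depend on $n$ (which is exactly what makes the two $\mathcal H(b)$'s literally the same set of functions), and (ii) the elementary two-case estimate above. An alternative route, avoiding the corona machinery, would start from the decomposition \eqref{ququ} of Theorem~\ref{uan} together with \eqref{gwiazdka}: since $\mathcal M\bigl(\tfrac{1-I}{2}\bigr)=(1-I)H^2$ as sets, both spaces are ``$(1-I)H^2$ plus a copy of $K_I$'', namely $q_n(I)K_I$ with $q_n(z)=2+2z+\dots+2z^{n-1}+z^n$ versus $K_I$ itself; writing $q_n(z)-q_n(1)=(z-1)s(z)$ for a polynomial $s$ and using $q_n(1)=2n+1\ne0$ gives $q_n(I)f=(2n+1)f-(1-I)s(I)f$ for $f\in K_I$, whence $q_n(I)K_I\subseteq(1-I)H^2+K_I$ and, after dividing by $2n+1$, also $K_I\subseteq(1-I)H^2+q_n(I)K_I$, so the two algebraic sums agree. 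The corona argument is the shorter of the two and is the one I would write up.
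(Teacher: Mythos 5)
Your proof is correct, but it takes a genuinely different route from the paper's. The paper proves \eqref{rowne} by two applications of the multiplicative decomposition $\mathcal{H}(b_1b_2)=\mathcal{H}(b_1)+b_1\mathcal{H}(b_2)$ (with $b_1=I^n$ for one inclusion and $b_1=\tfrac{1+I}{2}$ for the other), combined with the Fricain--Grivaux inclusions $K_{I^n}\subset\mathcal{H}(\tfrac{1+I}{2})$ and $I^n\mathcal{H}(\tfrac{1+I}{2})\subset\mathcal{H}(\tfrac{1+I}{2})$. You instead observe that the outer companion $a=\tfrac{1-I}{2}$ is the same for every $n$, verify by an elementary two-case estimate that $\bigl(\tfrac{I^n(1+I)}{2},\tfrac{1-I}{2}\bigr)$ is a corona pair (your Case 2 correctly uses $|I(z)|^n\le 1$ to pull the factor $|I(z)|^n$ out of both terms before applying the triangle inequality), and then invokes Sarason's theorem, quoted in the introduction, that a corona pair forces $\mathcal{H}(b)=\mathcal{M}(\overline{a})$ as sets. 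What your route buys is a concrete identification of the common set as $T_{\overline{a}}H^2$ and independence from the Fricain--Grivaux lemma, at the cost of leaning on the (nontrivial, but standard and already cited) corona-pair theorem; the paper's route stays entirely inside the $\mathcal{H}(b)$ calculus and needs no information about where $|a|+|b|$ is small. Your sketched alternative via \eqref{gwiazdka} and \eqref{ququ}, writing $q_n(z)-q_n(1)=(z-1)s(z)$ with $q_n(1)=2n+1\neq 0$, is also sound, though it depends on Theorem~\ref{uan}, which the paper's own proof deliberately avoids.
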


\begin{proof}
     To show \eqref{rowne} we recall that for $b_1$, $b_2$ from the closed unit ball of $H^\infty$ we have
    \begin{equation}\label{dec}
    \mathcal{H}(b_1b_2)=\mathcal{H}(b_1)+b_1\mathcal{H}(b_2).
    \end{equation}
    Moreover, if $b_1$ is an inner function, then the decomposition \eqref{dec} becomes orthogonal  (see \cite[(I-10)]{Sarason1} and \cite[(II-6)]{Sarason1}). Hence in our case
    $$\mathcal{H}(\tfrac{I^n(1+I)}{2})=K_{I^n}\oplus_b I^n\mathcal{H}(\tfrac{1+I}{2}).$$
    Recently E. Fricain and S. Grivaux \cite[Lemma 2.5]{FG} proved the following inclusions
    $$ K_{I^n}\subset\mathcal{H}(\tfrac{1+I}{2})\quad\text{and}\quad  I^n\mathcal{H}(\tfrac{1+I}{2})\subset \mathcal{H}(\tfrac{1+I}{2}).$$
Therefore
    $$\mathcal{H}(\tfrac{I^n(1+I)}{2})\subset \mathcal{H}(\tfrac{1+I}{2}).$$
    For the reverse inclusion it is enough to note that by \eqref{dec},
    $$\mathcal{H}(\tfrac{I^n(1+I)}{2})=\mathcal{H}(\tfrac{1+I}{2})+(1+I)K_{I^n}.    $$
%   and \eqref{rowne} follows.

\end{proof}

%%%%%%%%%%%%%%%%%%%%%%%%%%%%%%%%%%%%%%%%%%%%%%%%%%%%%%%%%%

\begin{theorem}\label{too}
    Let $I$ be an inner function such that $I(0)=0$ and let  $b=\frac{I(1+I^2)}{2}$. Then the following orthogonal decomposition of $\mathcal{H}(b)$ holds
    \begin{equation}\label{quququ}\mathcal{H}(b)= {(1-I^2)H^2}\oplus_b(2-I)(1-I)K_I\oplus_b(2+I+I^2) K_I.\end{equation}%(I^2-3I+2)
\end{theorem}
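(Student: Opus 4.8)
The plan is to follow the scheme of the proof of Theorem~\ref{uan}: write down the pair $(b,a)$, compute the Aleksandrov--Clark measure $\mu$ of $b$ together with its Lebesgue decomposition, and then transport everything into $\mathcal H(b)$ through the isometry $V_b$ of \eqref{fau}. Since $b=\tfrac{I(1+I^2)}{2}=\tfrac{I+I^3}{2}$, using $2-z-z^3=(1-z)(z^2+z+2)$ we get $1-b=\tfrac{(1-I)(I^2+I+2)}{2}$. Take $a=\tfrac{1-I^2}{2}$; then $|a|^2+|b|^2=1$ a.e. on $\T$ (because $|1-I^2|^2+|1+I^2|^2=2+2|I^2|^2=4$ on $\T$), $a$ is outer since $1-I^2=(1-I)(1+I)$ is a product of outer functions, and $a(0)=\tfrac12>0$, so $(b,a)$ is a pair and
\[F=\frac{a}{1-b}=\frac{1+I}{I^2+I+2}.\]
I would then show, exactly as in Theorem~\ref{uan}, that $d\mu=|F|^2\,dm+\tfrac12\,d\sigma_I$, where $\sigma_I$ is the singular measure of $I$ introduced there; equivalently, that
\[\widetilde F:=\frac{1+b}{1-b}-\frac12\cdot\frac{1+I}{1-I}=\frac{(1+I)(2-I)}{2(I^2+I+2)}\]
lies in $H^\infty$ and satisfies $\operatorname{Re}\widetilde F=\operatorname{Re}\tfrac{1+b}{1-b}=|F|^2$ a.e. on $\T$. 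The first point holds because $z^2+z+2$ has its two zeros, of modulus $\sqrt2$, outside $\overline{\D}$, so $\tfrac1{I^2+I+2}\in H^\infty$; here, in contrast to Theorem~\ref{uan}, no analogue of Lemma~\ref{lem_zeros_of_pol} is needed. The second point holds because $\operatorname{Re}\tfrac{1+I}{1-I}=0$ a.e.

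From $d\mu=|F|^2dm+\tfrac12 d\sigma_I$ and the orthogonal decomposition $H^2(\mu)=H^2(|F|^2dm)\oplus L^2(d\sigma_I)$ we obtain $\mathcal H(b)=V_bH^2(|F|^2dm)\oplus_b V_bL^2(d\sigma_I)$. The singular summand is handled exactly as in Theorem~\ref{uan}: $V_bL^2(d\sigma_I)=(1-b)\mathcal K_{\sigma_I}(L^2(d\sigma_I))=\tfrac12(I^2+I+2)(1-I)\mathcal K_{\sigma_I}(L^2(d\sigma_I))$, and since $(1-I)\mathcal K_{\sigma_I}(L^2(d\sigma_I))=V_IL^2(d\sigma_I)=K_I$, this gives the last summand $(2+I+I^2)K_I$.

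The remaining piece is $V_bH^2(|F|^2dm)=(1-b)T_{\overline F}H^2$, and here, unlike in Theorem~\ref{uan}, the operator $T_{\overline F/F}$ has a non-trivial kernel, which is the source of the middle summand (this is the situation anticipated after \eqref{wz_H(b)}). By \eqref{wz_H(b)}, $(1-b)T_{\overline F}H^2=\overline{\mathcal M(a)}^{\,b}\oplus_b T_{1-b}T_{\overline F}(\ker T_{\overline F/F})$. A boundary computation gives $\tfrac{\overline F}{F}=\tfrac{I(I^2+I+2)}{2I^2+I+1}=\tfrac I\theta$ on $\T$, where $\theta:=\tfrac{2I^2+I+1}{I^2+I+2}$ is inner: it is analytic and bounded in $\D$ because $I^2+I+2$ is zero-free on $\overline{\D}$, and $|2I^2+I+1|=|I^2+I+2|$ a.e. on $\T$ since $2z^2+z+1$ is the reversal of $z^2+z+2$. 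Hence $T_{\overline F/F}=T_{\overline\theta}T_I$, so $\ker T_{\overline F/F}=\{h\in H^2:\ Ih\in K_\theta\}$; this is a nearly $S^*$-invariant subspace, and it is a Toeplitz kernel because $F=(1+I)\cdot\tfrac1{I^2+I+2}$ with $\bigl(\tfrac1{I^2+I+2}\bigr)^2$ rigid (Theorem~6.22 of \cite{FM}), so Hitt's and Hayashi's theorems apply to pin it down. It then remains to identify the two summands as $\overline{\mathcal M(a)}^{\,b}=(1-I^2)H^2$ and $T_{1-b}T_{\overline F}(\ker T_{\overline F/F})=(2-I)(1-I)K_I$: for the first, since $a=F(1-b)$ and $T_{\overline F}T_{F/\overline F}=T_F$, the isometry $(1-b)T_{\overline F}$ carries $\overline{T_{F/\overline F}H^2}$ onto $\overline{\mathcal M(a)}^{\,b}=(1-I^2)H^2$ in the same way that $(1-b)T_{\overline F}H^2=(1-I)H^2$ was obtained in Theorem~\ref{uan}; for the second, one pushes the explicit form of $\ker T_{\overline F/F}$ through $T_{1-b}T_{\overline F}$, using $(1-b)\overline F=\tfrac{I(1-I^2)}{2\theta}$ a.e. on $\T$. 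Assembling the three summands yields \eqref{quququ}.

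The main obstacle is this last step: describing $\ker T_{\overline F/F}=\{h:Ih\in K_\theta\}$ explicitly and computing its image under $T_{1-b}T_{\overline F}$, which requires careful bookkeeping with Toeplitz operators with co-analytic and unimodular symbols, with the inner function $\theta=\tfrac{2I^2+I+1}{I^2+I+2}$, and with the structure theory of nearly $S^*$-invariant spaces. Should that route become unwieldy, an alternative is to verify \eqref{quququ} directly: that the three subspaces on its right-hand side are contained in $\mathcal H(b)$, are mutually $\|\cdot\|_b$-orthogonal, and together span $\mathcal H(b)$ — the orthogonality and norm computations reducing to computations in $H^2$ via the isometry $(1-b)T_{\overline F}$ and to computations in $L^2(d\sigma_I)$ via the isometry $V_b$.
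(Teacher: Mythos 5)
Your setup coincides with the paper's: the same pair $(b,a)$ with $a=\tfrac{1-I^2}{2}$, the same $F=\tfrac{1+I}{2+I+I^2}$, the same $\widetilde F=\tfrac{(2-I)(1+I)}{2(2+I+I^2)}$ giving $d\mu=|F|^2dm+\tfrac12\,d\sigma_I$, the same splitting $\mathcal H(b)=V_bH^2(|F|^2dm)\oplus_b V_bL^2(d\sigma_I)$ with $V_bL^2(d\sigma_I)=(2+I+I^2)K_I$, and the same reduction of the absolutely continuous part to $(1-I^2)H^2\oplus_b T_{1-b}T_{\overline F}\bigl(\ker T_{\overline F/F}\bigr)$. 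Up to that point the proposal is correct, and your observation that $z^2+z+2$ is zero-free on $\overline{\mathbb{D}}$ is exactly what replaces Lemma \ref{lem_zeros_of_pol} here.

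The gap is precisely at the step you yourself flag as ``the main obstacle'': the middle summand $(2-I)(1-I)K_I$ is never actually produced. The paper obtains it from two concrete computations that your sketch does not contain. First, $\ker T_{\overline F/F}=fK_I$ with $f=\sqrt{8/3}\,\tfrac{1}{2+I+I^2}$: this is the application of Hayashi's theorem, whose hypothesis is rigidity of $f_0^2$, where $f=\frac{a_0}{1-Ib_0}$ and $f_0=\frac{a_0}{1-b_0}$; the paper computes $a_0=\frac{2\sqrt6}{6+I}$, $b_0=-\frac{2+3I}{6+I}$ and checks that $f_0^2=\frac{3}{2(I+2)^2}$ is rigid. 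Your rigidity claim concerns $\bigl(\tfrac{1}{I^2+I+2}\bigr)^2$, which is not the function Hayashi's theorem asks about (this is also where the hypothesis $I(0)=0$ enters, via Hitt's theorem --- your argument never uses it). Second, one must show $T_{\overline F}(fK_I)=(2-I)fK_I$, which the paper does by expressing $\overline Ff$ through $(1+\overline I)\,\mathrm{Re}\bigl(\tfrac{-3I^2-I+6}{2+I+I^2}\bigr)$ and using that $\overline Ih\in\overline{zH^2}$ for $h\in K_I$; your identity $(1-b)\overline F=\frac{I(1-I^2)}{2\theta}$ a.e.\ on $\T$ does not substitute for this, because the projection $P$ sits between the two multiplications. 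Your reformulation $\ker T_{\overline F/F}=\{h:\ Ih\in K_\theta\}$ with $\theta=\frac{2I^2+I+1}{I^2+I+2}$ is correct but is a restatement rather than a computation, and the fallback of verifying \eqref{quququ} directly is only announced. So the skeleton matches the paper, but the two calculations that actually yield $(2-I)(1-I)K_I$ are missing.
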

\begin{proof}
As before, we have a unique positive Borel measure $\mu$ on $\mathbb{T}$ such that
\begin{equation*}%\label{her}
\begin{split}
\frac{1+b(z)}{1-b(z)}
&=\int_{\mathbb{T}}\frac{\zeta+z}{\zeta-z}d\mu(\zeta).
%&=\frac{1}{2\pi}\int_0^{2\pi}\frac{1+ze^{-it}}{1-ze^{-it}}|F(e^{it})|^2\,dt
%+\frac{1}{2\pi} \int_0^{2\pi}\frac{1+ze^{-it}}{1-ze^{-it}}d\sigma(e^{it}).
\end{split}
\end{equation*}
It is worth noting that $\mu$ is a probablility measure since $b(0)=0$. In this case the absolutely continuous component of the measure $\mu$ is equal to $|F|^2dm$, where
 \begin{equation*}
 \label{eff}F=\frac{a}{1-b}=\frac{1-I^2}{2-I-I^3}=\frac{1+I}{2+I+I^2}\end{equation*}
 and we show that
 \begin{equation}
 \label{compoz}
 d\mu=|F|^2dm+\tfrac{1}{2}\,d\sigma_I,
 \end{equation}
 where $\sigma_I$ is the singular measure defined as in the proof of Theorem \ref{uan}.
 Analogously, we define the function
 \begin{equation*}\label{eq_ex2_1}
 \widetilde{F}=\frac{1+b}{1-b}-\frac{1}{2}\cdot\frac{1+I}{1-I}=\frac{(2-I)(1+I)}{2(2+I+I^2)}\in H^{\infty}.
 \end{equation*}
Moreover, we have $\widetilde{F}(0) =\text{Re} \widetilde{F}(0)=\tfrac{1}{2}$ and $\text{Re} \widetilde{F}=|F|^2$ a.e. on $\mathbb{T}$, which proves \eqref{compoz}. Hence
 \begin{align*}
 \mathcal{H}(b)&=V_{b}H^2(|F|^2dm)\oplus_{b}V_{b}L^2(d\sigma_{I}).%\\
 %&=(1-b)T_{\overline{F}}H^2\oplus_b V_bL^2({d\sigma_I})
 %=\tfrac12(1-I)H^2\oplus_b V_bL^2({d\sigma_I}).
 \end{align*}
 Analogously to \eqref{12}, we get
 $$V_bL^2({d\sigma_I})=\tfrac12(2+I+I^2)(1-I)\mathcal{K}_{\sigma_I}(L^2({d\sigma_I}))=(2+I+I^2)K_I.$$
Furthermore,
 \begin{align*}V_{b}H^2(|F|^2dm)&=(1-b)T_{\overline{F}}H^2 =(1-b)T_{\overline{F}}\big(\overline{T_{\frac{F}{\overline{F}}}H^2}^2 \oplus \ker T_{\frac{\overline{F}}{F}}\big)\\&=
{ (1-I^2)H^2}\oplus_b (1-b)T_{\overline{F}}\big(\ker T_{\frac{\overline{F}}{F}}\big). \end{align*}
 To compute $\ker T_{\frac{\overline{F}}{F}}$ observe first that
 $$\frac{\overline{F}}{F}=\overline{I}\frac{\overline{f}_1}{f_1},$$
 where $f_1=\tfrac{1}{2+I+I^2}$. It is easy to check that
 $$\text{Re}\left(\frac{-3I^2-I+6}{2+I+I^2}\right)=8|f_1|^2\quad\text{a.e. on }\mathbb{T},$$
% Indeed,
% \begin{align*}
%\text{Re}\left(\frac{-3I^2-I+6}{2+I+I^2}\right)=
%\frac{\text{Re}\big((-3I^2-I+6)(2+\overline{I}+\overline{I}^2)\big)}{|2+I+I^2|^2}=\frac{8}{|2+I+I^2|^2}
% \end{align*}
 and so
$ \|f_1\|^2=\tfrac38$. Now let $f=\sqrt{\tfrac{8}{3}}f_1$. This outer function can be written in the form $f=\frac{a_0}{1-Ib_0}$ with
 $$a_0=\frac{2\sqrt{6}}{6+I} \qquad \text{and}\qquad b_0=-\frac{2+3I}{6+I}.$$
 Since
 $$f_0^2=\left(\frac{a_0}{1-b_0}\right)^2=\tfrac{3}{2}\frac{1}{(I+2)^2}$$ is rigid, by Hayashi's theorem (see \cite{Hayashi1,Hayashi2} and \cite{Sarason4}) we get
 $$\ker T_{\frac{\overline{F}}{F}}=\ker T_{\overline{I}\frac{\overline{f}}{f}}=f K_I.$$

 Now, for any function $h\in K_I$,
 \begin{align*}
    T_{\overline{F}}(fh)&=P(\overline{F}fh)=\sqrt{\tfrac{8}{3}}P\left(\frac{1+\overline{I}}{|2+I+I^2|^2}h\right)=\sqrt{\tfrac{1}{24}}P\left(({1+\overline{I}})h\text{Re}\left(\frac{-3I^2-I+6}{2+I+I^2}\right)\right)\\
    &=\tfrac{1}{4\sqrt{6}}P\left(({1+\overline{I}})h\left(\frac{-3I^2-I+6}{2+I+I^2}+\frac{-3\overline{I}^2-\overline{I}+6}{2+\overline{I}+\overline{I}^2}\right)\right).
 \end{align*}
 Hence, using the fact that $\overline{I}h\in \overline{zH^2}$ for $h\in K_I$, we get
 $$T_{\overline{F}}(fK_I)=(2-I)fK_I.$$
Consequently,
 \begin{align*}
 (1-b)T_{\overline{F}}\big(\ker T_{\frac{\overline{F}}{F}}\big)&=(1-b)T_{\overline{F}}\big(fK_I\big)=(1-b)(2-I)fK_I\\
 &=(2-I)(1-I)K_I=(I^2-3I+2)K_I.
 \end{align*}
% Summing up
% $$\mathcal{H}(b)\ominus \mathcal{M}(a)=(2+I+I^2) K_I\oplus_b(I^2-3I+2)K_I.$$
 %$$\mathcal{H}(b)\ominus \mathcal{M}(a)=V_bL^2({d\sigma})=\tfrac{1}{2n+1}p_n(I)\cdot V_I(L^2({d\sigma_I}))=p_n(I)\cdot K_I.$$

%$$\mathcal{H}(b)=V_{b}H^2(d\mu),$$

\end{proof}

\begin{proposition}

    For every nonconstant inner function $I$ such that $I(0)=0$ we have
        \begin{equation}\label{incl}
        \mathcal{H}(\tfrac{I(1+I^2)}{2})=\mathcal{H}(\tfrac{1+I^2}{2}).
        \end{equation}
    as sets.
\end{proposition}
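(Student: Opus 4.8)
The plan is to argue exactly as in the proof of Proposition \ref{tuan}, using the general factorization formula \eqref{dec}, the decomposition \eqref{gwiazdka} applied to the inner function $I^2$, and the Fricain--Grivaux inclusions from \cite[Lemma 2.5]{FG}.

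For the forward inclusion $\mathcal{H}(\tfrac{I(1+I^2)}{2})\subseteq\mathcal{H}(\tfrac{1+I^2}{2})$, write $\tfrac{I(1+I^2)}{2}=I\cdot\tfrac{1+I^2}{2}$ and apply \eqref{dec} with the inner factor $b_1=I$, so that
$$\mathcal{H}(\tfrac{I(1+I^2)}{2})=K_I\oplus_b I\,\mathcal{H}(\tfrac{1+I^2}{2}).$$
It therefore suffices to check that both $K_I$ and $I\,\mathcal{H}(\tfrac{1+I^2}{2})$ are contained in $\mathcal{H}(\tfrac{1+I^2}{2})$. Applying \eqref{gwiazdka} to the inner function $I^2$ gives, as sets,
$$\mathcal{H}(\tfrac{1+I^2}{2})=\mathcal{M}(\tfrac{1-I^2}{2})\oplus_b K_{I^2}=(1-I^2)H^2+K_{I^2},$$
so $K_{I^2}\subseteq\mathcal{H}(\tfrac{1+I^2}{2})$, and since $I^2H^2\subseteq IH^2$ we have $K_I\subseteq K_{I^2}$, which settles the first inclusion. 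For the second, I would invoke the orthogonal splitting $K_{I^2}=K_I\oplus IK_I$ in $H^2$ to rewrite, as sets, $\mathcal{H}(\tfrac{1+I^2}{2})=(1-I^2)H^2+K_I+IK_I$, whence
$$I\,\mathcal{H}(\tfrac{1+I^2}{2})=I(1-I^2)H^2+IK_I+I^2K_I.$$
Here $I(1-I^2)H^2\subseteq(1-I^2)H^2\subseteq\mathcal{H}(\tfrac{1+I^2}{2})$ and $IK_I\subseteq K_{I^2}\subseteq\mathcal{H}(\tfrac{1+I^2}{2})$, while $I^2K_I\subseteq I^2\,\mathcal{H}(\tfrac{1+I^2}{2})\subseteq\mathcal{H}(\tfrac{1+I^2}{2})$, the last step being \cite[Lemma 2.5]{FG} applied to the inner function $I^2$ (with $n=1$). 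Hence $I\,\mathcal{H}(\tfrac{1+I^2}{2})\subseteq\mathcal{H}(\tfrac{1+I^2}{2})$, and combining the two inclusions gives $\mathcal{H}(\tfrac{I(1+I^2)}{2})\subseteq\mathcal{H}(\tfrac{1+I^2}{2})$.

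For the reverse inclusion I would apply \eqref{dec} with the other grouping, $b_1=\tfrac{1+I^2}{2}$ and $b_2=I$, to get
$$\mathcal{H}(\tfrac{I(1+I^2)}{2})=\mathcal{H}(\tfrac{1+I^2}{2})+(1+I^2)K_I,$$
which immediately yields $\mathcal{H}(\tfrac{1+I^2}{2})\subseteq\mathcal{H}(\tfrac{I(1+I^2)}{2})$. Together with the previous step this proves \eqref{incl}.

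The only genuinely delicate point is the invariance of $\mathcal{H}(\tfrac{1+I^2}{2})$ under multiplication by $I$: the Fricain--Grivaux lemma, even with the substitution $I\mapsto I^2$, only gives invariance under multiplication by $I^2$, so one has to peel off a single factor of $I$ by means of the decomposition $K_{I^2}=K_I\oplus IK_I$. A direct argument that $I^2K_I\subseteq\mathcal{H}(\tfrac{1+I^2}{2})$ --- essentially, solving $T_{\overline{1-I^2}}g=(1+I^2)\kappa$ in $H^2$ for each $\kappa\in K_I$ --- appears to be considerably harder, which is why routing the proof through \cite{FG} is the cleaner option.
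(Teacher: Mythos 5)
Your proof is correct, but your argument for the inclusion $\mathcal{H}(\tfrac{I(1+I^2)}{2})\subseteq\mathcal{H}(\tfrac{1+I^2}{2})$ takes a genuinely different route from the paper's. The paper derives this inclusion from the orthogonal decomposition \eqref{quququ} of Theorem \ref{too}: it writes $\mathcal{H}(\tfrac{1+I^2}{2})=(1-I^2)H^2+K_I+IK_I$ and checks, via the identity $2+I+I^2=-(1-I^2)+3+I$ (and its analogue for $I^2-3I+2$), that each of the three orthogonal summands $(1-I^2)H^2$, $(2-I)(1-I)K_I$ and $(2+I+I^2)K_I$ lands in $\mathcal{H}(\tfrac{1+I^2}{2})$. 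You instead factor out the inner part first, $\mathcal{H}(\tfrac{I(1+I^2)}{2})=K_I\oplus_b I\,\mathcal{H}(\tfrac{1+I^2}{2})$, and reduce everything to the containments $K_I\subseteq K_{I^2}$ and $I\,\mathcal{H}(\tfrac{1+I^2}{2})\subseteq\mathcal{H}(\tfrac{1+I^2}{2})$, handling the latter through the splitting $K_{I^2}=K_I\oplus IK_I$ together with the Fricain--Grivaux inclusion applied to the inner function $I^2$; each step checks out. What your route buys: it bypasses Theorem \ref{too} entirely (whose proof is the hard part of the section, involving the Aleksandrov--Clark measure, rigidity and Hayashi's theorem), and it nowhere uses the hypothesis $I(0)=0$, so it establishes the set equality for every nonconstant inner $I$. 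What the paper's route buys: it is a two-line corollary once \eqref{quququ} is available, and it exhibits explicitly how the three orthogonal pieces of $\mathcal{H}(\tfrac{I(1+I^2)}{2})$ sit inside $\mathcal{H}(\tfrac{1+I^2}{2})$. Your treatment of the reverse inclusion via \eqref{dec} is identical to the paper's.
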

\begin{proof}
    As in the proof of Proposition \ref{tuan},
    \begin{equation*}
    \mathcal{H}(\tfrac{1+I^2}{2})\subset \mathcal{H}(\tfrac{1+I^2}{2})+\tfrac{1+I^2}{2}K_I=\mathcal{H}(\tfrac{I(1+I^2)}{2}).
    \end{equation*}
For the reverse inclusion recall that by \eqref{gwiazdka} and \eqref{dec},
    $$\mathcal{H}(\tfrac{1+I^2}{2})=(1-I^2)H^2+K_{I^2}=(1-I^2)H^2+K_{I}+IK_{I}.$$
    Since
    $$2+I+I^2=-(1-I^2)+3+I,$$
     we have
    $$(2+I+I^2)K_I\subset (1-I^2)K_I+K_{I}+IK_{I}=\mathcal{H}(\tfrac{1+I^2}{2}).$$
    Similarly,
    $$(I^2-3I+2)K_I\subset\mathcal{H}(\tfrac{1+I^2}{2})$$
    and \eqref{quququ} implies
     $$\mathcal{H}(\tfrac{I(1+I^2)}{2})\subset\mathcal{H}(\tfrac{1+I^2}{2}).$$%=(1-I^2)H^2+(2+I+I^2) K_I+(I^2-3I+2)K_I
%   and \eqref{incl} follows.
\end{proof}

\section{Remarks on cyclicity}

In this section we study the problem of cyclicity for the forward shift $Y$ on $\mathcal{H}(b)$ associated to a nonextreme $b$. For $f\in \mathcal{H}(b)$ let $[f]$ denote the closed linear span of polynomial multiples of $f$ in $\mathcal{H}(b)$. A function $f$ is called cyclic for $\mathcal{H}(b)$ if $[f]=\mathcal{H}(b)$. % We say that $f\in\mathcal{H}(b)$ is cyclic (for $Y$) if the linear span of $\{Y^nf:\ n\geq 0\}=\{z^nf:\ n\geq 0\}$ is dense in $\mathcal{H}(b)$.
Since in the nonextreme case polynomials are dense in $\mathcal{H}(b)$, a function $f$ is cyclic if and only if there exists a sequence of analytic polynomials $p_n$, such that \begin{equation}\label{ence}\lim\limits_{n\to\infty}\|1-p_nf\|_b=0.\end{equation}
Since $\mathcal{H}(b)$ is contained contractively in $H^2$, if $f$ is cyclic for $\mathcal{H}(b)$, then $f$ is an outer function.

%Let $V_b$ be given by \eqref{fau}. It follows from results of Poltoratski (see \cite{AP} and \cite{CMR}) that for $q\in L^{1}(d\mu)$ the function $V_bq$ has
%non-tangential limits $\mu$-a.e and that with respect to the singular component of $\mu$ these limits equal $q$.

Recently, using the Poltoratski result (see \cite{AP} and \cite[p. 231]{CMR}), A. Bergman obtained the following characterization of cyclic vectors for $\mathcal{H}(\tfrac{1+I}{2})$ in terms of the Aleksandrov-Clark measure $\sigma_I$ associated with the inner function $I$ (defined as in the proof of Theorem~\ref{uan}).
\begin{theoremB}[\cite{AB}]\label{be}
    Let $I$ be an inner function and let $b=\frac{1+I}{2}$. An outer function $f\in\mathcal{H}(b)$ is cyclic if and only if $f\neq 0$,  $\sigma_I$-a.e. on $\mathbb{T}$.
\end{theoremB}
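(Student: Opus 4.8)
The plan is to reduce cyclicity of $f$ to the statement $1\in[f]$ and to analyze it through the unitary $V_b\colon H^2(\mu)\to\mathcal H(b)$ together with the orthogonal decomposition \eqref{gwiazdka}. For $b=\tfrac{1+I}2$ one has $a=\tfrac{1-I}2$ and $F=\tfrac a{1-b}\equiv1$, so the computation in the proof of Theorem~\ref{uan} with $n=0$ gives $d\mu=dm+2\,d\sigma_I$; writing $H^2(\mu)=H^2(dm)\oplus L^2(\sigma_I)$, the map $V_b$ sends $(g,0)$ to $ag\in\mathcal M(a)$ and $(0,\psi)$ to $V_I(\psi)\in K_I$. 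Two structural facts are needed. First, a boundary reduction: since $a$ has nontangential limit $0$ $\sigma_I$-a.e. and, by Clark's theorem (a special case of Poltoratski's theorem), $V_I(\psi)$ has nontangential limit $\psi$ $\sigma_I$-a.e., every $h\in\mathcal H(b)$ has nontangential limits $\sigma_I$-a.e., equal to the $L^2(\sigma_I)$-component of $V_b^{-1}h$; consequently ``$f\neq0$ $\sigma_I$-a.e.'' is equivalent to ``$\psi\neq0$ $\sigma_I$-a.e.'', where $\psi:=V_I^{-1}(P_{K_I}f)$. Second, relative to $\mathcal H(b)=\mathcal M(a)\oplus_bK_I$ the forward shift $Y$ is block upper triangular, with diagonal blocks the shift $S$ on $\mathcal M(a)\cong H^2$ and the Clark unitary $U=V_IM_\zeta V_I^{-1}$ on $K_I$, and rank-one off-diagonal block $K_I\to\mathcal M(a)$ (this can be read off from the identity $zV_I(q)=V_I(\zeta q)-\langle\zeta q,\mathbf1\rangle_{\sigma_I}(1-I)$ and the orthogonality in \eqref{gwiazdka}); in particular $P_{K_I}Y^nf=U^n P_{K_I}f$ for every $n\geq0$.

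For the necessity, let $R\colon\mathcal H(b)\to L^2(\sigma_I)$ be the map sending $h$ to its nontangential boundary values on $\operatorname{supp}\sigma_I$; it is bounded (it is a coordinate of the isometry $V_b^{-1}$), and $R(z^nf)=\zeta^nRf$, $R1\equiv1$. If $\sigma_I(\{f=0\})>0$, choose $E\subseteq\{Rf=0\}$ with $0<\sigma_I(E)<\infty$; then the bounded linear functional $h\mapsto\int_ERh\,d\sigma_I$ vanishes on $\operatorname{span}\{z^nf\}$, hence on $[f]$, but takes the value $\sigma_I(E)>0$ at the constant function $1$. Thus $1\notin[f]$, so $f$ is not cyclic.

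For the sufficiency, assume $f$ outer and $f\neq0$ $\sigma_I$-a.e., so $\psi\neq0$ $\sigma_I$-a.e. I prove $[f]=\mathcal H(b)$ in two steps. \emph{Step (I): $\overline{P_{K_I}[f]}=K_I$.} Indeed $P_{K_I}[f]\supseteq\operatorname{span}\{U^nP_{K_I}f:n\geq0\}$, which under $V_I^{-1}$ is $\operatorname{span}\{\zeta^n\psi:n\geq0\}$; if $\psi'\in L^2(\sigma_I)$ is orthogonal to this set, then the complex measure $\psi\overline{\psi'}\,d\sigma_I$ is singular with respect to $m$ and annihilates the disc algebra, hence is zero by the F.\ and M.\ Riesz theorem, so $\psi'=0$ $\sigma_I$-a.e.\ because $\psi\neq0$ $\sigma_I$-a.e.; therefore $\operatorname{span}\{\zeta^n\psi:n\geq0\}$ is dense in $L^2(\sigma_I)$ and $\overline{P_{K_I}[f]}=K_I$. \emph{Step (II): $\mathcal M(a)\subseteq[f]$} — this is where outerness of $f$ enters. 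Writing $f=ag+V_I(\psi)$, a direct computation gives $(1-I)f=a\bigl((1-I)g+2V_I(\psi)\bigr)=a\cdot(2f)$, an element of $\mathcal M(a)$ whose $H^2$-symbol $2f$ is outer; hence $\overline{\operatorname{span}}\{z^n(1-I)f\}=\mathcal M(a)$ by cyclicity of outer functions for the shift, transported through $\mathcal M(a)\cong H^2$. So it suffices to check $(1-I)f\in[f]$. Now $I$, and hence $1-I$, is a multiplier of $\mathcal H(\tfrac{1+I}2)=\mathcal H(b)$ (by the Fricain--Grivaux inclusion $I\,\mathcal H(\tfrac{1+I}2)\subseteq\mathcal H(\tfrac{1+I}2)$ used in Proposition~\ref{tuan}, together with the closed graph theorem); taking Fej\'er means $\phi_N=\sigma_N(1-I)$ we get polynomials with $\|\phi_N\|_\infty\leq2$ and $\phi_Nf\to(1-I)f$ pointwise in $\mathbb D$, so that $\phi_Nf\rightharpoonup(1-I)f$ weakly in $\mathcal H(b)$ provided $\{\phi_Nf\}$ is bounded in $\mathcal H(b)$; since each $\phi_Nf\in[f]$ and $[f]$ is weakly closed, $(1-I)f\in[f]$.

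Combining the two steps: since $\mathcal M(a)\subseteq[f]$, for every $v\in[f]$ we have $P_{\mathcal M(a)}v\in\mathcal M(a)\subseteq[f]$, hence $P_{K_I}v=v-P_{\mathcal M(a)}v\in[f]$; thus $P_{K_I}[f]\subseteq[f]$, and by Step (I) and closedness $K_I\subseteq[f]$, so $[f]\supseteq\mathcal M(a)+K_I$, which is dense in $\mathcal H(b)$, and $[f]=\mathcal H(b)$. The main obstacle is Step (II), precisely the inclusion $(1-I)f\in[f]$: the weak-convergence argument sketched above needs the uniform bound $\sup_N\|\phi_Nf\|_b<\infty$, i.e.\ polynomial boundedness of the (non-contractive) operator $Y$ on $\mathcal H(b)$ evaluated at $f$. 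Establishing this uniform estimate — or, alternatively, showing directly that $[f]^\perp$ is trivial by using that it is $Y^\ast$-invariant (so that its $\mathcal M(a)$-components form an $S^\ast$-invariant subspace of $H^2$) together with the nontangential boundary data and Poltoratski's theorem, which is the route of \cite{AB} — is the substantive point of the proof.
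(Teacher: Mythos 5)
The paper does not prove Theorem B1; it is quoted from Bergman's paper \cite{AB}, so there is no internal proof to compare against, and your attempt has to stand on its own. Your necessity argument is essentially the right (and the standard) one: boundary values on the carrier of $\sigma_I$ via the normalized Cauchy transform give a bounded functional killing $[f]$ but not $1$. One point there needs repair: you justify that elements $ag\in\mathcal M(a)$ have nontangential limit $0$ $\sigma_I$-a.e.\ by ``$a\to0$ $\sigma_I$-a.e.'', but $g\in H^2$ need not have nontangential limits at $\sigma_I$-a.e.\ point (the carrier of $\sigma_I$ is $m$-null), so the product argument is not available; you must instead apply Poltoratski's theorem to the full normalized Cauchy transform $V_\mu q=\mathcal K_\mu q/\mathcal K\mu$ and use that $(1-b)\mathcal K\mu$ tends nontangentially to $1$ on the carrier of the singular part. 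That is fixable.

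The sufficiency direction, however, contains a genuine gap, which you yourself flag: the inclusion $(1-I)f\in[f]$, equivalently $\mathcal M(a)\subseteq[f]$. Your Fej\'er-mean argument requires $\sup_N\|\phi_Nf\|_b<\infty$, and this does not follow from $\|\phi_N\|_\infty\le2$: for nonextreme $b$ one has $\|h\|_b^2=\|h\|_2^2+\|h^+\|_2^2$ with $T_{\bar b}h=T_{\bar a}h^+$, and since $T_{\phi_N}$ does not commute with $T_{\bar b}$ (neither symbol is conjugate-analytic), $(\phi_Nf)^+\neq\phi_Nf^+$ and the multiplier norm of $\phi_N$ on $\mathcal H(b)$ is not controlled by its sup norm. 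Knowing that $1-I$ is a bounded multiplier (Fricain--Grivaux) gives $(1-I)f\in\mathcal H(b)$ but says nothing about membership in the $Y$-invariant subspace $[f]$. Moreover this step cannot be excised: without $\mathcal M(a)\subseteq[f]$ you cannot pass from ``$P_{K_I}[f]$ dense in $K_I$'' (your Step (I), which is correct) to $K_I\subseteq[f]$, since $[f]$ need not be invariant under $P_{K_I}$. So as written the proof establishes necessity (modulo the boundary-value lemma) but not sufficiency; the substantive content of Bergman's theorem is exactly the step you leave open, and closing it requires the $Y^\ast$-invariance/Poltoratski analysis of $[f]^\perp$ that you mention only as an alternative route.
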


Here we get
\begin{corollary}
    Let $I$ be an inner function. Then
    \begin{enumerate}[(a)]
    \item an outer function $f\in\mathcal{H}(\tfrac{I^n(1+I)}{2})$ is cyclic if and only if $f\neq 0$  $\sigma_I$-a.e. on $\mathbb{T}$.
        \item an outer function $f\in\mathcal{H}(\tfrac{I(1+I^2)}{2})$, with $I(0)=0$, is cyclic if and only if $f\neq 0$  $\sigma_{I^2}$-a.e. on $\mathbb{T}$.
\end{enumerate}
\end{corollary}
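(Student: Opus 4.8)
The plan is to reduce each claim to an application of Theorem B1 (Bergman's theorem) via the set-equalities established in the two preceding propositions, together with the basic fact that cyclicity is a property of the norm topology of the space, and the spaces involved coincide \emph{as sets} — but one must be careful, since equal sets with different norms need not have the same cyclic vectors. So the first step is to argue that the norm on $\mathcal{H}(b)$ for $b$ of the form $\tfrac{I^n(1+I)}{2}$ (resp. $\tfrac{I(1+I^2)}{2}$) is equivalent to the norm on $\mathcal{H}(\tfrac{1+I}{2})$ (resp. $\mathcal{H}(\tfrac{1+I^2}{2})$): by Proposition~\ref{tuan} the two spaces coincide as sets, both are Hilbert spaces continuously contained in $H^2$, hence by the closed graph theorem the identity map between them is bounded both ways, so the norms are equivalent. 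Consequently the closures $[f]$ computed in either norm agree, and an outer $f$ is cyclic in one space if and only if it is cyclic in the other.

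For part (a): given an outer $f\in\mathcal{H}(\tfrac{I^n(1+I)}{2})$, by Proposition~\ref{tuan} we have $f\in\mathcal{H}(\tfrac{1+I}{2})$ as well, and by the norm-equivalence just noted $f$ is cyclic for $\mathcal{H}(\tfrac{I^n(1+I)}{2})$ iff it is cyclic for $\mathcal{H}(\tfrac{1+I}{2})$. Theorem~B1 then says this happens iff $f\neq 0$ $\sigma_I$-a.e. on $\mathbb{T}$, where $\sigma_I$ is the singular Aleksandrov–Clark measure attached to $I$ as in the proof of Theorem~\ref{uan}. (One should note in passing, or recall from the earlier discussion, that cyclic vectors are automatically outer, so the hypothesis that $f$ be outer is the natural one.) For part (b): the analogous argument uses the proposition following Theorem~\ref{too}, which gives $\mathcal{H}(\tfrac{I(1+I^2)}{2})=\mathcal{H}(\tfrac{1+I^2}{2})$ as sets when $I(0)=0$, hence (again by closed graph) with equivalent norms; then one applies Theorem~B1 with the inner function $I^2$ in place of $I$, which is legitimate since $I^2$ is itself inner, to conclude that an outer $f$ is cyclic iff $f\neq 0$ $\sigma_{I^2}$-a.e.\ on $\mathbb{T}$, where $\sigma_{I^2}$ is the singular measure associated with $I^2$ via $\tfrac{1+I^2}{1-I^2}=\int_{\mathbb{T}}\tfrac{\zeta+z}{\zeta-z}\,d\sigma_{I^2}(\zeta)+i\operatorname{Im}\tfrac{1+I(0)^2}{1-I(0)^2}$.

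The only real subtlety — and the step I expect to be the main obstacle — is the norm-equivalence issue, i.e.\ justifying that equality of the spaces as sets upgrades to equivalence of norms so that the \emph{same} functions are cyclic. This is where the closed graph theorem enters: both $\mathcal{H}(b_1)$ and $\mathcal{H}(b_2)$ are Hilbert spaces that inject continuously into $H^2$, so if they have the same underlying set, the identity operator $\mathcal{H}(b_1)\to\mathcal{H}(b_2)$ has closed graph (a convergent sequence in $\mathcal{H}(b_1)$ converges in $H^2$, the image does too, and limits in $\mathcal{H}(b_2)$ are also $H^2$-limits), hence is bounded; symmetrically for its inverse. It would be cleanest to state this as a short preliminary observation — perhaps even fold it into the statement of the propositions — so that the corollary's proof is then essentially one line per part: invoke the set-equality, invoke norm-equivalence, invoke Theorem~B1. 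Everything else (that $\sigma_I$, $\sigma_{I^2}$ are well-defined singular measures, that the relevant $f$ is outer) is either already in the excerpt or immediate.
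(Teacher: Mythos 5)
Your proposal is correct and follows essentially the same route as the paper: both reduce to Theorem B1 via the set-equalities of the two propositions, and both use the closed graph theorem (with the continuous containment in $H^2$) to upgrade the set-equality to norm-equivalence, so that cyclicity transfers between the spaces. The subtlety you flag is exactly the one the paper addresses, in the same way.
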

\begin{proof}
To prove (a) it is enough to note that, by Proposition \ref{tuan},  $\mathcal{H}(\tfrac{I^n(1+I)}{2})=\mathcal{H}(\tfrac{1+I}{2})$, and use \mbox{Theorem B1}.
Indeed, both of these spaces are contractively contained in $H^2$ and the closed graph theorem implies that their norms are equivalent.
Hence \eqref{ence} holds true for $b=\tfrac{I^n(1+I)}{2}$ if and only if it does for $b=\tfrac{1+I}{2}$.
%
%cyclicity is defined in terms of convergence, and so it does not depend on the equivalent norm one considers. Hence (a) follows from Theorem B.

The proof of (b) is analogous.
\end{proof}

%If $\mathcal M(a)$ is not dense in $\mathcal H(b)$,  we let $\mathcal H_0(b)$ denote the orthogonal complement  of
%$\mathcal M(a)$ in $\mathcal H(b)$.
It is shown in \cite{Sarason1} that the
codimension of ${\M(a)}$ in $\H(b)$ is $N$  if  and only if
the operator $Y^\ast$ has eigenvalues $\overline{z}_1, \overline{z}_2,\dots, \overline{z}_s$ on the
unit circle with their algebraic multiplicities $n_1,\dots, n_s$ and
$N=n_1+n_2+\dots+n_s$. Then $\H_0(b)$ is the span of the root spaces of the operator $Y^\ast$, that is,
$
\H_0(b)=\bigvee_{j=1}^s \ker(Y^\ast-\bar
{z}_j)^{n_j}.
$
In \cite{NSS} this space has been described explicitly. To cite this result we need additional notation.

%
%Recall that if $\mathcal M(a)$ is not dense in $\mathcal H(b)$,  we let $\mathcal H_0(b)$ denote the orthogonal complement  of
%$\mathcal M(a)$ in $\mathcal H(b)$. Assume now that the
%codimension of $\overline{\M(a)}$ in $\H(b)$ is $N$. As mentioned in the Introduction, $\H_0(b)$ is the span of the root spaces of the operator $Y^\ast$, that is,
%$$
%\H_0(b)=\bigvee_{j=1}^s \ker(Y^\ast-\bar
%{z}_j)^{n_j},
%$$
%where  $\overline{z}_1, \overline{z}_2,\dots, \overline{z}_s\in\mathbb{T}$ are the eigenvalues of $Y^\ast$ with algebraic multiplicities $n_1,\dots, n_s$,
%$N=n_1+n_2+\dots+n_s$.

For  $k=0,1,\dots $, let
$\upsilon^k_{b,w}=\frac{\partial^k k_{w}^b}{\partial \bar{w} ^k}$ be the kernel
function in $\H(b)$  for the functional of evaluation of the $k$-th
derivative at $w\in\mathbb{D}$, which means that for $f\in\H(b)$,
\[
f^{(k)}(w)= \langle f,\upsilon^k_{b,w}\rangle_b.
\]
Under  the above assumption the functions $\upsilon^{k_j}_{b,z_j}$ ($j=1, 2,\dots, s,\ k_j= 0,1,\dots n_j-1$)
can be defined as pointwise limits of $ \upsilon^{k_j}_{b,w}$ as $w$ tends nontangentially to $z_j$, that is,
\[
\upsilon^{k_j}_{b,z_j}(z)=\lim_{\substack{w\to z_j \\
        \sphericalangle}}\upsilon^{k_j}_{b,w}(z),\quad  z\in\D.
\]
Moreover, for any $f\in\mathcal H(b)$,
\[
f^{(k_j)}(z_j)=\lim_{\substack{w\to z_j \\
        \sphericalangle}}f^{(k_j)}(w)= \lim_{\substack{w\to z_j \\
        \sphericalangle}}\langle f,  \upsilon^{k_j}_{b,w}\rangle_b=\langle f,  \upsilon^k_{b,z_j}\rangle_b.
\]

In \cite{NSS} the following explicit description of $\H_0(b)$ has been obtained for the above case.

\begin{ttheorem}[\cite{NSS}]\label{tw2} Assume that $\dim\H_0(b)=N$ and the operator $Y^\ast$ has eigenvalues $\overline{z}_1, \overline{z}_2,\dots, \overline{z}_s$ on the
    unit circle with their algebraic multiplicities $n_1,\dots, n_s$ and
    $N=n_1+n_2+\dots+n_s$. Then, under the above notation,
    \begin{equation}
        \H_0(b)=\bigvee\limits\{\upsilon_{b,z_j}^{k_j}\colon\,j=1,\ldots,s,\  k_j=0,1,\ldots,n_j-1\}.
        \label{H0}\end{equation}
\end{ttheorem}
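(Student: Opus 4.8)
The plan is to combine the quoted abstract description $\H_0(b)=\bigvee_{j=1}^{s}\ker(Y^\ast-\bar z_j)^{n_j}$ (from \cite{Sarason1}) with the Jordan-chain structure of the kernels $\upsilon_{b,w}^{k}$. Since the root spaces $\ker(Y^\ast-\bar z_j)^{n_j}$ belong to distinct eigenvalues they are linearly independent, and as they span the $N$-dimensional space $\H_0(b)$ with $N=n_1+\dots+n_s$, we must have $\dim\ker(Y^\ast-\bar z_j)^{n_j}=n_j$ for each $j$. Hence it is enough to produce, inside each root space, $n_j$ linearly independent vectors; the candidates are $\upsilon_{b,z_j}^{0},\dots,\upsilon_{b,z_j}^{n_j-1}$.

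First I would record how $Y^\ast$ acts on reproducing kernels. For nonextreme $b$ the space $\H(b)$ is $S$-invariant and $Y=S|_{\H(b)}$ is bounded (closed graph theorem, using that $\H(b)\hookrightarrow H^2$ continuously); for $w\in\D$ and $f\in\H(b)$,
\begin{equation*}
\langle Yf,k_w^b\rangle_b=(Yf)(w)=wf(w)=w\,\langle f,k_w^b\rangle_b=\langle f,\bar w\,k_w^b\rangle_b,
\end{equation*}
so $Y^\ast k_w^b=\bar w\,k_w^b$. The map $w\mapsto k_w^b$ is an $\H(b)$-valued function holomorphic in $\bar w$, and $Y^\ast$, being bounded, commutes with $\partial/\partial\bar w$; differentiating $Y^\ast k_w^b=\bar w\,k_w^b$ exactly $k$ times in $\bar w$ and applying the Leibniz rule gives $Y^\ast\upsilon_{b,w}^{k}=\bar w\,\upsilon_{b,w}^{k}+k\,\upsilon_{b,w}^{k-1}$, i.e.
\begin{equation*}
(Y^\ast-\bar w)\,\upsilon_{b,w}^{k}=k\,\upsilon_{b,w}^{k-1}\qquad(w\in\D,\ k\ge0,\ \upsilon_{b,w}^{-1}:=0).
\end{equation*}
Iterating, $(Y^\ast-\bar w)^{k}\upsilon_{b,w}^{k}=k!\,k_w^b$ and $(Y^\ast-\bar w)^{k+1}\upsilon_{b,w}^{k}=0$.

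Next I would let $w\to z_j$ nontangentially. Under the standing hypotheses the limits defining $\upsilon_{b,z_j}^{k_j}$ exist in $\H(b)$ together with the (weak) convergence $\langle f,\upsilon_{b,w}^{k_j}\rangle_b\to\langle f,\upsilon_{b,z_j}^{k_j}\rangle_b$ for $k_j=0,\dots,n_j-1$; this is exactly the material recalled just before the statement, from \cite[Ch.\ X]{Sarason1}. Testing the identity $(Y^\ast-\bar w)\upsilon_{b,w}^{k}=k\upsilon_{b,w}^{k-1}$ against an arbitrary $f\in\H(b)$ and passing to the limit yields
\begin{equation*}
(Y^\ast-\bar z_j)\,\upsilon_{b,z_j}^{k_j}=k_j\,\upsilon_{b,z_j}^{k_j-1},\qquad (Y^\ast-\bar z_j)^{k_j+1}\upsilon_{b,z_j}^{k_j}=0,
\end{equation*}
so $\upsilon_{b,z_j}^{k_j}\in\ker(Y^\ast-\bar z_j)^{k_j+1}\subseteq\ker(Y^\ast-\bar z_j)^{n_j}\subseteq\H_0(b)$ whenever $k_j\le n_j-1$. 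Moreover $\upsilon_{b,z_j}^{0}=k_{z_j}^b\ne0$: since $1\in\H(b)$ and $\langle 1,k_w^b\rangle_b=1$ for all $w\in\D$, we get $\langle 1,k_{z_j}^b\rangle_b=1$. Hence $\upsilon_{b,z_j}^{0},\dots,\upsilon_{b,z_j}^{n_j-1}$ is a genuine Jordan chain for $Y^\ast$ at $\bar z_j$, so it is linearly independent, and chains attached to distinct $z_j$ remain jointly independent. This produces $n_1+\dots+n_s=N$ linearly independent elements of $\H_0(b)$; as $\dim\H_0(b)=N$, they form a basis, which is precisely \eqref{H0}. (Independence can also be read off the reproducing relations $f^{(k_j)}(z_j)=\langle f,\upsilon_{b,z_j}^{k_j}\rangle_b$, since the derivative-evaluation functionals at the distinct $z_j$ are already independent on polynomials, which are dense in $\H(b)$.)

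The only genuinely analytic input — and the main obstacle if one does not simply quote \cite{Sarason1} — is the existence of the nontangential limits $\upsilon_{b,z_j}^{k_j}$ in $\H(b)$ and the attendant boundary behaviour of the derivative-evaluation functionals; this is where finiteness of $\dim\H_0(b)$, equivalently the factorization $F_\lambda=p\cdot f$ with the roots of $p$ on $\T$ and $f^2$ rigid (as recalled above), actually enters. Once that is granted, the remaining argument is the purely algebraic Jordan-chain bookkeeping above.
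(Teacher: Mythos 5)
The paper never proves Theorem~C --- it is imported verbatim from \cite{NSS} and used as a black box --- so there is no in-paper argument to compare yours against; what follows is an assessment of your proof on its own terms. Your Jordan-chain argument is correct. The identity $Y^\ast k_w^b=\overline{w}\,k_w^b$, the differentiated relation $(Y^\ast-\overline{w})\upsilon_{b,w}^{k}=k\,\upsilon_{b,w}^{k-1}$, and the limit passage tested against arbitrary $f\in\H(b)$ are all sound, the last one needing only the weak convergence $\langle f,\upsilon_{b,w}^{k_j}\rangle_b\to\langle f,\upsilon_{b,z_j}^{k_j}\rangle_b$, which is exactly what the paper records just before the statement. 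Non-vanishing of the bottom of each chain via $\langle 1,k_{z_j}^b\rangle_b=1$, joint independence of root spaces at distinct eigenvalues, and the final dimension count $N=n_1+\dots+n_s$ then force the $N$ vectors $\upsilon_{b,z_j}^{k_j}$ to span $\H_0(b)$. You also correctly isolate the one genuinely analytic ingredient: the existence of the boundary kernels $\upsilon_{b,z_j}^{k_j}$ as (weak) nontangential limits, equivalently the boundedness of the derivative-evaluation functionals at the $z_j$. That is where the substance of \cite[Ch.~X]{Sarason1} and \cite{NSS} lives (via the factorization $F_\lambda=p\cdot f$ with $f^2$ rigid), and since the paper explicitly grants it in the preliminaries to Theorem~C, your reduction of the theorem to purely algebraic bookkeeping is a legitimate and arguably cleaner packaging than a direct computation of the $\upsilon_{b,z_j}^{k_j}$ would be. Your parenthetical alternative for linear independence via Hermite interpolation on polynomials is also valid and slightly more elementary than iterating $(Y^\ast-\overline{z}_j)$.
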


%We are interested in cyclic vectors for the operator $Y: \H(b)\to \H(b)$ ( recall that  $Y$ is the
%restriction of the shift operator $S$ to $\H(b)$).
%It is known that if $f\in \H(b)$ is cyclic for $Y$, then $f$ is  an outer function.
%Moreover, any outer function $f\in \H(b)$ is cyclic for $\H(b)$ if and only if $\mathcal M(a)$ is dense in $\H(b)$ (see [AB] and [FMS])
%
%Important results on such cyclic vectors  are given in [AB].
%Among other results the autor obtains the  characterizations  of cyclic vectors in the case
%when the codimension of $\overline{\M(a)}$ in $\H(b)$ is finite. His result can be stated as  follows

Recently, A. Bergman \cite{AB} proved also
\begin{theoremBB}[\cite{AB}]

    Under the assumptions of Theorem C a function $f\in \H(b)$ is cyclic if and only if $f$ is outer and $f(z_j)\ne 0$, $j=1,2,\dots,s$.
\end{theoremBB}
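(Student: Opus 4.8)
The plan is to deduce Theorem B2 from Theorem B1 together with the explicit description of $\H_0(b)$ given in Theorem C, by reducing cyclicity in $\H(b)$ to a finite-dimensional obstruction. First I would recall that, since $\M(a)$ has finite codimension $N$ in $\H(b)$, the space $\M(a)$ is closed and $\H(b)=\M(a)\oplus_b\H_0(b)$, where by Theorem C we have $\H_0(b)=\bigvee\{\upsilon_{b,z_j}^{k_j}\colon j=1,\ldots,s,\ k_j=0,\ldots,n_j-1\}$. The key point is that $\M(a)=aH^2$ is $Y$-invariant and polynomials act on it by multiplication; moreover, since $f\mapsto f(z_j)$ and more generally $f\mapsto f^{(k_j)}(z_j)$ are bounded functionals on $\H(b)$ (represented by the $\upsilon_{b,z_j}^{k_j}$), one checks that a function $g\in\M(a)$ is cyclic for $Y|_{\M(a)}$ precisely when $g$ is outer. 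This last fact should follow from the corona-type identification $\M(a)=\M(\overline a)$ being false in general, so instead I would invoke that $\M(a)=aH^2$ with the range norm is isometrically a weighted shift-invariant subspace on which $Y$ is unitarily equivalent to $S$ on $H^2$ via $f\mapsto af$; cyclicity of $af$ under $S$ in $H^2$ means $af$ is outer, i.e. $f$ is outer (since $a$ is outer).

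Next I would handle the passage from $\M(a)$ to all of $\H(b)$. Write $P$ for the ($\|\cdot\|_b$-orthogonal) projection of $\H(b)$ onto $\H_0(b)$. If $f\in\H(b)$ is outer with $f(z_j)\neq 0$ for all $j$, I want to show $[f]=\H(b)$. The idea: first produce, inside $[f]$, a function lying in $\M(a)$ that is still outer. Because $\H_0(b)$ is finite-dimensional and spanned by the derivative-evaluation kernels at the $z_j$, a polynomial multiple $pf$ lies in $\M(a)$ if and only if $(pf)^{(k_j)}(z_j)=0$ for all $j$ and $k_j=0,\ldots,n_j-1$; these are $N$ linear conditions on the coefficients of $p$, so for a generic polynomial $p$ of degree $N$ (with prescribed nonvanishing on $\overline\D$, possible since the conditions are non-degenerate once $f(z_j)\ne 0$) we get $pf\in\M(a)$ with $pf$ outer. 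Then $[pf]$ in the $\M(a)$-norm equals $\M(a)$ by the previous paragraph, and since the $\M(a)$-norm dominates the $\|\cdot\|_b$-norm on $\M(a)$ (contractive inclusion), $[pf]\supseteq\M(a)$ in $\H(b)$ as well, hence $[f]\supseteq\M(a)$.

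It remains to capture $\H_0(b)$. Since $[f]$ is $Y$-invariant and contains $\M(a)$, and $\H(b)/\M(a)\cong\H_0(b)$ is finite-dimensional on which $Y^*$ acts with the single eigenvalues $\overline z_j$, the image of $[f]$ in the quotient is a $Y$-invariant subspace; I would argue it is everything because $f$ itself has a nonzero component in every root space — concretely, $\langle f,\upsilon_{b,z_j}^{0}\rangle_b=f(z_j)\neq 0$ shows $f\notin\M(a)$ and more: using all the functionals $f^{(k_j)}(z_j)$ one shows the cyclic subspace generated by the class of $f$ under $Y$ in the finite-dimensional quotient is the whole quotient (a companion-matrix / non-derogatory argument, using that $f(z_j)\ne0$ prevents the orbit from missing any Jordan block). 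Combining, $[f]=\H(b)$. For the converse, if $f$ is not outer it is not cyclic since $\H(b)\subset H^2$ contractively; and if $f(z_j)=0$ for some $j$ then $f\perp_b\upsilon_{b,z_j}^{0}$ and, because $Y^*\upsilon_{b,z_j}^{0}=\overline z_j\upsilon_{b,z_j}^{0}$, every polynomial multiple $pf$ also satisfies $\langle pf,\upsilon_{b,z_j}^{0}\rangle_b=p(z_j)f(z_j)=0$, so $\upsilon_{b,z_j}^{0}\perp_b[f]$ and $f$ is not cyclic.

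The main obstacle I expect is the quotient/root-space step: making rigorous that $f(z_j)\neq 0$ forces the $Y$-orbit of $[f]$ to fill each Jordan block $\ker(Y^*-\overline z_j)^{n_j}$ of the finite-dimensional piece $\H_0(b)$, rather than just the top quotient $\ker(Y^*-\overline z_j)$. This requires knowing the precise action of $Y$ (not just $Y^*$) on the span of the $\upsilon_{b,z_j}^{k_j}$, or equivalently an argument via the functionals $f\mapsto f^{(k_j)}(z_j)$ showing that the annihilator of $[f]$ in $\H_0(b)$ is $Y^*$-invariant and contained in no $\ker(Y^*-\overline z_j)^{n_j}$ when $f(z_j)\ne0$; the cleanest route is probably to transfer everything to $\M(\overline a)\supset\M(a)$ and use that on the one-dimensional-at-a-time spaces the relevant Toeplitz/multiplier computations are explicit, or to cite directly the structure of $\H_0(b)$ from \cite{NSS} which already records how $Y$ permutes these kernels.
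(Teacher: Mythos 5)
Your overall architecture (reduce to ``$[f]\supseteq\M(a)$'' plus a finite-dimensional obstruction in $\H_0(b)$, with the easy necessity argument) is reasonable, and your necessity direction is correct. But there are two problems. First, a concrete error in the construction of $p$: you cannot choose $p$ nonvanishing on $\overline{\D}$ with $pf\in\M(a)$, because $\M(a)=\H_0(b)^{\perp}$ forces $(pf)(z_j)=p(z_j)f(z_j)=0$, and $f(z_j)\neq 0$ then forces $p(z_j)=0$ with $z_j\in\T\subset\overline{\D}$. The fix is to take $p(z)=\prod_j(z-z_j)^{n_j}$; this does vanish on $\T$ but is nevertheless outer (its zeros lie on the circle), so $pf$ is outer and your transfer to $aH^2$ via Beurling still goes through. (The paper simply invokes $\M(a)\subseteq[f]$ for outer $f$ as known; your argument for it, once repaired, is a legitimate self-contained substitute in this finite-codimension setting.)

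The more serious issue is the step you yourself flag as the ``main obstacle'': showing that $[f]$ captures the whole root space $\ker(Y^{\ast}-\overline{z}_j)^{n_j}$ and not just the eigenline $\ker(Y^{\ast}-\overline{z}_j)$. That step is precisely the content of the paper's proof, and it is done without any analysis of how $Y$ acts on $\H_0(b)$ and without any Jordan-block or quotient argument. One takes $h=\sum_{k}c_k\upsilon^{k}_{b,z_1}\in\H_0(b)$ orthogonal to $[f]$ and pairs it with the specific element $(z-z_1)^{n_1-1}f\in[f]$; the reproducing property $\langle g,\upsilon^{k}_{b,z_1}\rangle_b=g^{(k)}(z_1)$ (nontangential boundary values) gives $\langle (z-z_1)^{n_1-1}f,\upsilon^{k}_{b,z_1}\rangle_b=0$ for $k\le n_1-2$ and $=(n_1-1)!\,f(z_1)\neq 0$ for $k=n_1-1$, forcing $c_{n_1-1}=0$; one then descends through $(z-z_1)^{n_1-2}f,\dots,f$ to kill all coefficients, and the several-eigenvalue case uses $(z-z_1)^{n_1}\cdots(z-z_s)^{n_s-1}f$ in the same way. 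Your proposed alternative (a non-derogatory/companion-matrix argument on $\H(b)/\M(a)$) could in principle be completed, but it requires establishing that $Y^{\ast}|_{\H_0(b)}$ has a single Jordan block per eigenvalue and relating cyclicity of the class of $f$ to the pairings $f(z_j)$, which you do not do; as written, the decisive step of the theorem is missing.
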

Using Theorem C we can give a slightly different proof of the sufficiency of the condition for cyclicity in this case.

To this end assume that  a nonextreme $b$ is such that $\H(b)=\mathcal M(a)\oplus_b \H_0(b)$ and $\H_0(b)$ is given by (\ref{H0}).
Let  $f\in \H(b)$ be  an outer function such that $f(z_j)\ne 0$, $ j=1,2 \dots ,s$.
It is enough to show that if $h\in \H(b)$ and for $ n=0,1, \dots$,
\[\langle h, z^nf\rangle_b=0,\] then  $h=0$.  Let $h=h_a+\tilde  h$, where $h_a\in\mathcal M(a)$ and $\tilde  h\in \mathcal H_0(b).$
Since $\mathcal M(a)\subset [f]$, we can assume that $ h\in \mathcal H_0(b)$.

First assume  additionally that $$ \H_0(b)=\bigvee\limits\{\upsilon_{b,z_1}^{k}\colon\,  k=0,1,\ldots,n_1-1\},$$ which means that  the   operator  $Y^\ast$ has only one eigenvalue $\overline{z}_1$
with the multiplicity $n_1$ ($n_1\geq 1$).
Then an $h\in\mathcal H_0(b)$ can be written  as
\begin{equation} h= c_0 \upsilon_{b,z_1}^{0 }+ c_1\upsilon_{b,z_1}^{1 }+ \dots + c_{n_1 -1}\upsilon_{b,z_1}^{n_1-1 }.\label{1}\end{equation}
Assume that an outer $f\in \mathcal H(b)$  is such that    $f(z_1)\ne 0$ and  $\langle h,z^nf\rangle= 0  $ for $n=0,1,2,\dots$.  Then, in particular,
\[
0=\langle c_0\upsilon_{b,z_1}^{0 }+ c_1\upsilon_{b,z_1}^{1 }+ \dots + c_{n_1 -1}\upsilon_{b,z_1}^{n_1-1 }, (z-z_1)^{n_1-1}f\rangle_b
%= c_{n_1-1}\lim_{\substack{z\to z_j \\
    %   \sphericalangle}}\langle \upsilon_{b,z_1}^{n_1-1 }, (z-z_1)^{n_1-1}f\rangle_b
\]
Now notice that for $k=0,1,\ldots,n_1-2$,
$$\langle \upsilon_{b,z_1}^{k }, (z-z_1)^{n_1-1}f\rangle_b=0$$
and
\begin{align*}%\lim_{\substack{z\to z_1 \\
%\sphericalangle}}
&\overline{  \langle \upsilon_{b,z_1}^{n_1-1 }, (z-z_1)^{n_1-1}f\rangle_b} =\lim_{\substack{z\to z_1 \\
            \sphericalangle}}\left( (z-z_1)^{n_1-1}f(z)\right)^{(n_1-1)}\\ &=\lim_{\substack{z\to z_1\\
            \sphericalangle}}\left(\sum_{k=0}^{n_1-1}\binom{n_1-1}{k}(n_1-1)(n_1-2)\cdots (n_1-k)(z-z_1)^{n_1-1-k}f^{(n_1-1-k)}(z)\right)\\ &=
    \lim_{\substack{z\to z_1 \\
            \sphericalangle}} f(z)(n_1-1)!=f(z_1)(n_1-1)!\neq 0,
\end{align*}
which implies  $ c_{n_1-1}=0$.

Next applying similar reasoning to $ c_0\upsilon_{b,z_1}^{0 }+ c_1\upsilon_{b,z_1}^{1 }+ \dots + c_{n_1 -2}\upsilon_{b,z_1}^{n_1-2 }$ with
$(z-z_1)^{n_1-1}f$ replaced by $(z-z_1)^{n_1-2}f$ we can show that $c_{n_1-2}=0 $ and  continuing this procedure we find that $h=0$.

Similar reasoning can be also applied to the general case. Indeed, if $\mathcal H_0(b)$ is given by (\ref{H0}),  $h\in\mathcal H_0(b)$
and $f\in \mathcal H(b)$ is an outer function  such that $f(z_j)\ne0$  for $j=1,2,\dots ,s$,
then we can note first that
if $ h= \sum\limits_{j=1}^{s}\sum\limits_{k_j=0}^{n_s-1}c_{j,k_j}\upsilon_{b,z_j}^{k_j }$ and

$$\langle h,(z-z_1)^{n_1}(z-z_2)^{n_2}\dots (z-z_s)^{n_s-1}f\rangle_b=0,$$ then $c_{s,n_s-1}=0$ and apply an analogous procedure to that above to show that $h=0$.

 \subsection*{Acknowledgments}

 The authors want to thank Professor Andrzej So\l tysiak for helpful discussions while preparing this article.


\begin{thebibliography}{99}


\bibitem{AB}
A. Bergman, {\em On cyclicity in de Branges-Rovnyak spaces}, Indiana Univ. Math. J. \textbf{73} (2024), no. 4, 1307--1329.

\bibitem{CMR} J. A. Cima, A. L. Matheson, W. T. Ross, {\em The Cauchy Transform}, American Mathematical Society 2006.

\bibitem{FG}
E. Fricain, S. Grivaux, {\em Cyclicity in de Banges-Rovnyak spaces}, Moroccan J. of Pure and Appl. Anal, Vol. \textbf{9}(2) (2023), 216--237.

\bibitem{FHR}
 E. Fricain, A. Hartmann, W. T.  Ross, {\em Range spaces of co-analytic Toeplitz operators,} Canad. J. Math. \textbf{70} (2018), no. 6, 1261--1283.




\bibitem{FM}
    E. Fricain and J. Mashreghi, {\em The Theory of $\H(b)$
        Spaces\/}, Vol. 1 and 2, Cambridge University Press 2016.

\bibitem{FMS} E. Fricain, J. Mashreghi, D. Seco, {\em Cyclicity in nonextreme de Banges-Rovnyak spaces,} Invariant subspaces of the shift operator, 131--136,
Contemp. Math., 638, Centre Rech. Math. Proc., Amer. Math. Soc., Providence, RI, 2015.

\bibitem{Hayashi1}
E. Hayashi, \emph{The kernel of a Toeplitz operator}, Integral Equations and Operator Theory \textbf{9} (1986), no. 4, 588--591.

\bibitem{Hayashi2}
E. Hayashi, \emph{Classification of nearly invariant subspaces of   the backward shift}, Proc. Amer Math. Soc. \textbf{110} (1990), no. 2, 441--448.

\bibitem{Hitt}
D. Hitt, \emph{Invariant subspaces of $\mathcal{H}^2$ of an
annulus}, Pacific J. Math. \textbf{134} (1988), no. 1, 101--120.
\bibitem{LN} B. \L anucha, M. Nowak, \emph{De Branges-Rovnyak spaces and generalized Dirichlet spaces,}  Publ. Math. Debrecen  \textbf{91} (2017), 171--184.


\bibitem{NSS}M. T. Nowak, P. Sobolewski, A. So\l tysiak,\emph{ The orthogonal complement of $\mathcal{M}(a)$ in $\mathcal{H}(b)$},
Studia Math. \textbf{260} (2021), no. 3, 327--340.

\bibitem{AP} A. Poltoratski,\emph{
On the distributions of boundary values of Cauchy integrals,} Proc. Amer. Math.
Soc. \textbf{124} (1996), no. 8, 2455--2463.

        \bibitem{RS}
        Q. I. Rahman, G. Schmeisser, \emph{Analytic Theory of Polynomials}, Clarendon Press, Oxford, 2002.

 \bibitem{Sarason4}
D. Sarason, \emph{Kernels of Toeplitz operators}, In Toeplitz operators and related topics (Santa Cruz, CA, 1992), Operator Theory: Advances and Applications, (Vol. 7, pp. 153-164), Birkh\"{a}user, Basel, 1994.

    \bibitem{Sarason1}
    D. Sarason, \emph{Sub-Hardy Hilbert Spaces in the Unit Disk}, in:
    University of Arkansas Lecture Notes in Mathematical Sciences, Vol.
    10, J. Wiley \& Sons, Inc., New York 1994.


\end{thebibliography}
\end{document}